\documentclass[12pt]{amsart}
\pdfoutput=1

\textwidth160mm \oddsidemargin5mm \evensidemargin5mm

\usepackage[bibtex, julia]{ulthiel}
\usetikzlibrary{babel}

\usepackage[utf8]{inputenc}
\usepackage[T1]{fontenc}
\usepackage{amssymb}
\usepackage{autobreak}
\usepackage[linesnumbered,ruled,vlined]{algorithm2e}
\usepackage{longtable}

\SetKwInput{KwInput}{Input}                
\SetKwInput{KwOutput}{Output}              


\usepackage{bbold}
\usepackage{enumitem}
\usepackage[]{tcolorbox}
\tcbuselibrary{listings, breakable}

\usepackage{listofitems}
\newcommand{\AnyonCode}[1]{
    \readlist*\Fvar{#1}
    \ensuremath{F^{\Fvar[1],\Fvar[2],\Fvar[3]}_{\Fvar[4],\Fvar[5],\Fvar[6],\Fvar[7]}}
}

\lstset{escapechar = \%,
        belowskip  = 0pt}

\title[Computing the center of a fusion category]{Computing the center of a fusion category}

\author[F. Mäurer]{Fabian Mäurer}
\email{maeurer@mathematik.uni-kl.de}

\author[U. Thiel]{Ulrich Thiel}
\email{thiel@mathematik.uni-kl.de}

\address{Department of Mathematics, University of Kaiserslautern--Landau, Postfach 3049, 67663 Kaiserslautern, Germany}

\date{November 7, 2025}

\DeclareMathOperator{\ev}{ev}
\DeclareMathOperator{\coev}{coev}
\DeclareMathOperator{\FPdim}{FPdim}
\DeclareMathOperator{\Gr}{Gr}

\DeclareUnicodeCharacter{2295}{\ensuremath{\oplus}}
\DeclareUnicodeCharacter{2297}{\ensuremath{\otimes}}
\DeclareUnicodeCharacter{1D7D9}{\ensuremath{\mathbb{1}}}
\DeclareUnicodeCharacter{221A}{\ensuremath{\sqrt{}}}
\DeclareUnicodeCharacter{3C7}{\ensuremath{\chi}}
\DeclareUnicodeCharacter{22C5}{\ensuremath{\cdot}}
\DeclareUnicodeCharacter{D7}{\ensuremath{\times}}
\DeclareUnicodeCharacter{3B1}{\ensuremath{\alpha}}
\DeclareUnicodeCharacter{3C1}{\ensuremath{\rho}}
\DeclareUnicodeCharacter{2217}{\ensuremath{\ast}}
\DeclareUnicodeCharacter{3B3}{\ensuremath{\gamma}}

\usetikzlibrary{shapes.geometric}
\tikzset{
    baseline = (current bounding box.center)
}

\newcommand{\tzCanonicalBasisLeft}[7]{
    \begin{tikzpicture}[baseline = (current bounding box.center)]
        \draw (0,0) -- (0.5,1) -- node[pos = 0.5, below right = -5pt, scale = 0.7] {$#5$} (1,2) -- (1,3);
        \draw (1,0) -- (0.5,1);
        \draw (2,0) -- (1,2);
        \node[below] at (0,0) {$#1$};
        \node[below] at (1,0) {$#2$};
        \node[below] at (2,0) {$#3$};
        \node[above] at (1,3) {$#4$};
        \node[left] at (0.5,1) {$#6$};
        \node[left] at (1,2) {$#7$};
    \end{tikzpicture}
}

\newcommand{\tzCanonicalBasisRight}[7]{
    \begin{tikzpicture}[baseline = (current bounding box.center)]
        \draw (2,0) -- (1.5,1) -- node[pos = 0.5, below left = -5pt, scale = 0.7] {$#5$} (1,2) -- (1,3);
        \draw (1,0) -- (1.5,1);
        \draw (0,0) -- (1,2);
        \node[below] at (0,0) {$#1$};
        \node[below] at (1,0) {$#2$};
        \node[below] at (2,0) {$#3$};
        \node[above] at (1,3) {$#4$};
        \node[right] at (1.5,1) {$#6$};
        \node[right] at (1,2) {$#7$};
    \end{tikzpicture}
}

\newcommand{\tzBasisVector}[4 ]{
    \begin{tikzpicture}
        \draw (0,0) -- (0.5,1) -- (0.5,2);
        \draw (1,0) -- (0.5,1); 

        \node[below] at (0,0) {$#1$};
        \node[below] at (1,0) {$#2$};
        \node[above] at (0.5,2) {$#3$};
        \node[left] at (0.5,1) {$#4$};
    \end{tikzpicture}
}

\setcounter{tocdepth}{1}

\begin{document}

\begin{abstract}
 We present an algorithm for explicitly computing the categorical (Drinfeld) center of a pivotal fusion category. Our approach is based on decomposing the images of simple objects under the induction functor from the category to its center. We have implemented this algorithm in a general-purpose software framework \textsc{TensorCategories.jl} for tensor categories that we develop within the open-source computer algebra system OSCAR. We compute explicit models for the centers in form of the tuples $(X,\gamma)$ where $X$ is an object and $\gamma$ is a half-braiding. From these models we can compute the $F$-symbols and $R$-symbols. Using the data from the AnyonWiki, we were able to compute the center together with its $F$-symbols and $R$-symbols for all  the 279 multiplicity-free fusion categories up to rank 5, and furthermore some chosen examples of rank 6, including the Haagerup subfactor (presented in a separate paper). 
 
\end{abstract}

\maketitle

\newcommand{\tageq}{\refstepcounter{equation}\tag{\theequation}}

\newcommand{\mat}[1]{\begin{bmatrix}
    #1
\end{bmatrix}}

\section{Introduction}

Let $\mathcal{C}$ be a monoidal category with associators 
\begin{equation} \label{equ:ass}
    a_{X,Y,Z} \colon (X \otimes Y) \otimes Z \overset{\simeq}{\longrightarrow} X \otimes (Y \otimes Z) \;.
\end{equation}
Throughout, we will use conventions as in the standard reference \cite{EGNO}. We are concerned with the \emph{categorical (Drinfeld) center} $\mathcal{Z}(\mathcal{C})$ of $\mathcal{C}$. This is a categorification of the notion of the center of a monoid where, as usual, equalities are replaced by isomorphisms, and we keep track of these. Specifically, a \emph{half-braiding} for an object $X \in \mathcal C$ is a natural isomorphism 
\begin{equation} 
    \gamma_X = \{\gamma_X(Y) \colon X \otimes Y \overset{\simeq}{\longrightarrow} Y \otimes X \mid Y \in \mathcal C \} 
\end{equation}
such that the diagram
\begin{equation}
    \begin{tikzcd}[font = \small, column sep = large]
        & (Y \otimes X) \otimes Z \ar[r, "a_{Y,X,Z}"] & Y \otimes (X \otimes Z) \ar[dr, "\id_Y \otimes \gamma_X(Z)"] & \\
    (X \otimes Y) \otimes Z \ar[ur, "\gamma_X(Y) \otimes \id_Z"] \ar[dr, "a_{X,Y,Z}"] & & & Y \otimes (Z \otimes X) \\
        & X \otimes (Y \otimes Z) \ar[r, "\gamma_X(Y \otimes Z)"] & (Y \otimes Z) \otimes X \ar[ur, "a_{Y,Z,X}"]  &
    \end{tikzcd}
\end{equation}
commutes for all $Y,Z \in \mathcal C$ and $\gamma_{1} = \id_X$. The center $\mathcal{Z}(\mathcal{C})$ is the category whose objects are pairs $(X,\gamma_X)$ of an object $X \in \mathcal C$ and a half-braiding $\gamma_X$ for $X$, and the morphisms $(X,\gamma_X) \to (Y,\gamma_Y)$ are morphisms $f \colon X \to Y$ in $\mathcal{C}$ such that the diagram
\begin{equation}
    \begin{tikzcd}[column sep = large]
            X \otimes Z \ar[r, "f \otimes \id_Z"] \ar[d, "\gamma_X(Z)"] & Y \otimes Z \ar[d, "\gamma_Y(Z)"] \\
            Z \otimes X \ar[r, "\id_Z \otimes f"] & Z \otimes Y
    \end{tikzcd}  
    \label{CenterMorphism}
\end{equation}
commutes for all $Z \in \mathcal C$. The category $\mathcal{Z}(\mathcal{C})$ inherits a natural monoidal structure from~$\mathcal{C}$, see \cite[Section 7.13]{EGNO}. Moreover, the half-braidings induce the structure of a braiding (``commutativity constraint'') on $\mathcal{Z}(\mathcal{C})$, so that $\mathcal{Z}(\mathcal{C})$ is a braided monoidal category, see \cite[Section 8.5]{EGNO}. Braided monoidal categories play an important role in knot theory and physics \cite{Bakalov2001}, which is one of the reasons why the center construction is important.

Among monoidal categories, fusion categories \cite{Etingof2005} play a major role since they arise in several areas of mathematics and physics. The center of a fusion category is a braided fusion category, in fact a modular category \cite{Bakalov2001}, and thus of particular interest. \\

In this paper we will present an algorithm for computing the center of a pivotal fusion category (Section \ref{sec:algorithm}). The algorithm is theoretically deterministic in the sense that every step is constructive and all tools used along are in theory deterministic. 
In the process we need to find simple subobjects of objects which amounts to decomposing its endomorphism algebra. Solving this problem over a general field is hard, but there are Las Vegas algorithms which perform very well in practice, namely the MeatAxe algorithm and its variants \cite{Parker1984, Holt1994, Lux2010}. 

Our algorithm is theoretically straightforward. The main point is that we realized it in practice. To this end, we have begun developing a general-purpose software framework \textsc{TensorCategories.jl} \cite{Maeurer_TensorCategories_jl} for tensor categories and categorical representation theory. This framework is based on the open-source computer algebra system OSCAR \cite{OSCAR, OSCAR-book} which incorporates and extends powerful systems like GAP \cite{GAP4} and Singular \cite{Singular}. OSCAR in turn is based on the high-performance programming language Julia \cite{Bezanson2017,JuliaWebsite} which, due its type system design and multiple dispatch paradigm, is nicely suited for working with categorical structures. Julia has a simple high-level syntax like Python but is much faster. OSCAR provides, e.g., fast algebra over number fields, which is crucial for our purposes. \\

As with group representations, the theory of fusion categories shows its greatest development so far over an algebraically closed field of characteristic zero. An important aspect of our algorithm and implementation is that it works over any field. This leads to interesting new results, one of which we want to mention here.  

Let $\mathcal{C}'$ be the famous Ising fusion category. This is a fusion category over the complex numbers with three simple objects denoted by $\mathbb 1$, $\chi$, and $X$. The multiplication is given by $\chi \otimes \chi = 1$, $\chi \otimes X = X \otimes \chi = X$ and $X \otimes X = \mathbb 1 \oplus \chi$, The Ising category is a special case of Tambara--Yamagami fusion categories \cite{TAMBARA1998692}. From general theory it is known \cite[Proposition 4.1]{Gelaki2009} that the center $\mathcal{Z}(\mathcal{C}')$ has 9 simple objects. The non-trivial associators of~$\mathcal{C}'$ are given by 
\begin{align*}
    a_{\chi, X, \chi} &= (-1)\id_{X} \\
    a_{X,\mathbb 1,X} &= \id_{\mathbb 1} \oplus (-1)\id_\chi \\
    a_{X,\chi,X} &= (-1)\id_{\mathbb 1}\oplus \id_\chi \\
    a_{X,X,X} &= \frac{1}{\sqrt{2}}\begin{pmatrix}1 & 1 \\ 1 & -1\end{pmatrix}\id_{2X} \;,
\end{align*}
see \cite{TAMBARA1998692}. It follows that $\mathcal{C}'$ can be defined over $\mathbb{Q}(\sqrt{2})$, i.e., it has a $\mathbb{Q}(\sqrt{2})$-rational form~$\mathcal{C}$. This is a fusion category over $\mathbb{Q}(\sqrt{2})$, and it is known from general theory that $\mathcal{Z}(\mathcal{C})$ is a semisimple tensor category---more precisely, it is what we call a weak pre-modular category, see Section \ref{sec:assumptions}. What is the structure of $\mathcal{Z}(\mathcal{C})$? As far as we are aware, this does not appear in the literature yet. Via explicit calculations with our software in Section \ref{sec:software} we will prove:

\begin{theorem} \label{thm:ising}
    Let $\mathcal{C}$ be the Ising fusion category considered over $\mathbb{Q}(\sqrt 2)$. Then $\mathcal Z(\mathcal C)$ has five simple objects with dimensions $1,1,2,2$ and $4\cdot \sqrt 2$. The multiplication table is given by 
    
    \medskip
    \begin{center}
        \begin{tabular}{c||c|c|c|c|c}
            $\otimes$ & $\mathbb 1$ & $\overline{\mathbb 1}$ & $\chi^2$ & $\mathbb 1 + \chi $ & $X^4$ \\ \hline \hline 
            $\mathbb 1$ & $\mathbb 1$ & $\overline{\mathbb 1}$ & $\chi^2$ & $\mathbb 1 + \chi $ & $X^4$ \\ \hline
            $\overline{\mathbb 1}$ & $\overline{\mathbb 1}$ & $\mathbb 1$ &  $\chi^2$ & $\mathbb 1 + \chi $ & $X^4$ \\ \hline
            $\chi^2$ & $\chi^2$ & $\chi^2$ & $2\mathbb 1 + 2\overline{\mathbb 1}$ & $2(\mathbb 1 + \chi)$ & $2 X^4$ \\ \hline
            $\mathbb 1 + \chi $ & $\mathbb 1 + \chi $ & $\mathbb 1 + \chi $ & $2(\mathbb 1 + \chi)$ & $\mathbb 1 + \overline{\mathbb 1} + \chi^2$ & $2X^4$ \\ \hline
            $X^4$ & $X^4$ & $X^4$ & $2X^4$ & $2X^4$ & $4\mathbb 1 + 4\overline{\mathbb 1} + 4\chi^2 + 8(\mathbb 1 + \chi)$
        \end{tabular} 
    \end{center} \medskip
    and the $S$-matrix is given by 
    \begin{align*}
        S = \begin{pmatrix}
            1 & 1 & 2 & 2 & 4\cdot \sqrt 2 \\
            1 & 1 & 2 & 2 & -4 \cdot \sqrt 2 \\
            2 & 2 & 4 & -4 & 0 \\
            2 & 2 & -4 & 0 & 0 \\
            4\cdot \sqrt 2 & -4 \cdot \sqrt 2 & 0 & 0 & 0
        \end{pmatrix}
    \end{align*}
    In particular, $\mathcal{Z}(\mathcal{C})$ is a non-split modular category. It splits over $\mathbb Q(\xi_{16})$ where $\xi_{16}$ is a primitive 16-th root of unity. 
\end{theorem}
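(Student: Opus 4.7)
The plan is to run the induction algorithm of Section~\ref{sec:algorithm} on the Ising category $\mathcal C$ regarded as a fusion category over $\mathbb Q(\sqrt 2)$, and then to decompose each induced object into simple central objects using the MeatAxe-type routines described there. Concretely, I would first compute the three central objects $I(\mathbb 1)$, $I(\chi)$ and $I(X)$, where the induction functor $I \colon \mathcal C \to \mathcal Z(\mathcal C)$ sends an object $V$ to $\bigoplus_{Y \in \mathrm{Irr}(\mathcal C)} Y \otimes V \otimes Y^{*}$ with the canonical half-braiding assembled from the associators of $\mathcal C$ listed in the introduction.

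Next, I would decompose each $I(V)$ by finding primitive idempotents in the finite-dimensional $\mathbb Q(\sqrt 2)$-algebra $\mathrm{End}_{\mathcal Z(\mathcal C)}(I(V))$. Because the base field is not algebraically closed, some of these idempotents will have residue rings that are proper extensions of $\mathbb Q(\sqrt 2)$, producing the non-split simple objects $\overline{\mathbb 1}$, $\chi^2$ and $X^4$ appearing in the theorem. General theory (see Section~\ref{sec:algorithm}) guarantees that every simple of $\mathcal Z(\mathcal C)$ occurs in at least one $I(V)$ for a simple $V$, so after these three decompositions one has located exactly the five simple objects claimed.

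With representatives of the five simples in hand, I would compute the multiplication table by tensoring each pair of them in $\mathcal Z(\mathcal C)$ and decomposing the result with the same idempotent routine. The $S$-matrix entries are read off as $S_{AB} = \mathrm{tr}(c_{B,A} \circ c_{A,B})$, where the braiding $c$ is the one induced by the half-braidings on the induced objects. Non-singularity of the resulting matrix then verifies that $\mathcal Z(\mathcal C)$ is modular, while the presence of simples with non-trivial endomorphism extensions over $\mathbb Q(\sqrt 2)$ shows that it is non-split. Finally, for the splitting statement I would base change the whole computation to $\mathbb Q(\xi_{16})$ (which contains $\sqrt 2$) and re-run the decomposition on the three non-split simples; these should break up into further pieces, and the count should reproduce the nine simples of the center of the $\mathbb C$-linear Ising category predicted by \cite[Proposition 4.1]{Gelaki2009}.

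The main obstacle is, as the authors themselves emphasise, purely computational: the MeatAxe-style idempotent search must terminate successfully on the endomorphism algebras over $\mathbb Q(\sqrt 2)$, and the tensor-product decompositions (in particular those involving $X^4$, whose underlying object already has Frobenius--Perron dimension $4\sqrt 2$) must stay within the limits of the implementation. Once those reductions go through, the multiplication table, the $S$-matrix, its non-singularity, and the identification of $\mathbb Q(\xi_{16})$ as a splitting field all follow by direct inspection of the computer output.
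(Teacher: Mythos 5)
Your approach matches the paper's: the result is established precisely by instantiating the Ising $6j$-data over $\mathbb{Q}(\sqrt 2)$ in \textsc{TensorCategories.jl}, running the induction-and-decompose algorithm of Section~\ref{sec:algorithm} on $I(\mathbb 1)$, $I(\chi)$, $I(X)$, tensoring the resulting simples, computing the $S$-matrix, and iteratively extending the base field until the endomorphism algebras split. One small factual slip: only two of the five simples are non-split over $\mathbb{Q}(\sqrt 2)$, namely $\mathbb 1 + \chi$ (whose endomorphism algebra is a quadratic extension, splitting into two objects) and $X^4$ (splitting into four), so that $5 \mapsto 9$ over $\mathbb{Q}(\xi_{16})$; the objects $\overline{\mathbb 1}$ and $\chi^2$ are already scalar over $\mathbb{Q}(\sqrt 2)$. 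This does not affect the validity of your strategy, since the idempotent search in each $\mathrm{End}_{\mathcal Z(\mathcal C)}(I(V))$ detects the non-split objects automatically rather than requiring them to be known in advance.
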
 

We emphasize that our computational construction of the center is completely explicit so that we obtain the simple central objects as objects of $\mathcal{C}$ together with half-braidings (omitted here to save space). We can also analyze the splitting of the central objects when passing to the algebraic closure of $\mathbb{Q}$ or finite field extensions, see Section \ref{sec:software}. For split categories the computation of $F$-symbols and $R$-symbols is then only a computational task we also implemented, see section \ref{sec:F_symbols}. \\

As a main result of this paper we computed the centers of all multiplicity-free fusion categories up to rank five. In \cite{vercleyen2024lowrankmultiplicityfreefusioncategories, AnyonWiki}, $F$-symbols for all multiplicity-free fusion categories up to rank seven were computed. These are available via the Mathematica package Anyonica \cite{Anyonica} and the website AnyonWiki \cite{AnyonWiki} by Gert Vercleyen. Thanks to Gert Vercleyen we could integrate this database into our package and apply our implementation. For the resulting braided categories we stored $F$-symbols and $R$-symbols as part of our package and accessible on GitHub\footnote{\url{https://github.com/FabianMaeurer/TensorCategoriesDatabase}}. Moreover, we computed the center for the fusion category coming from the Haagerup subfactor which we present in the separate paper \cite{HaagerupCenter}. 

\vspace{\baselineskip}

In Section \ref{sec:assumptions} we will specify our setting and make precise what we mean by ``computing the center''. In Section \ref{sec:naive} we discuss the computation of half-braidings to illustrate the complexity of the problem. In Section \ref{sec:algorithm} we discuss our algorithm. In Section \ref{sec:splitting} we derive a theoretical result on the splitting of central objects under field extension. In Section \ref{sec:software} we present our software framework and illustrate it with the proof of Theorem \ref{thm:ising}. Finally, in Section \ref{sec:AnyonCenters} we will present the data for the centers of all multiplicity-free fusion categories up to rank five.

\subsection*{Acknowledgments}
This work is a contribution to the SFB-TRR 195 ``Symbolic Tools in Mathematics and their Application'' of the German Research Foundation (DFG), Project-ID 28623755. We thank Gert Vercleyen for the effort of porting the fusion categories from Anyonica to our package. We thank Liam Rogel for testing our software and providing helpful feedback. We thank Gunter Malle for comments on a preliminary version of this paper.

\tableofcontents


\section{Basic assumptions, notation, and facts} \label{sec:assumptions}

We first want to emphasize that we do \emph{not} require monoidal categories to be strict, i.e. the associators \eqref{equ:ass} may be non-identity morphisms. It is true that by Mac Lane's strictness theorem \cite[Theorem 2.8.5]{EGNO}, any monoidal category is monoidally equivalent to a strict monoidal category. But such a monoidal equivalence may be difficult to represent in the computer, which is against our algorithmic approach. Moreover, to represent a monoidal category in the computer one will usually choose a skeleton. Even though every category is equivalent to a skeletal category, a monoidal category may in general not be monoidally equivalent to a skeletal strict monoidal category, i.e. one cannot achieve strict and skeletal simultaneously, see \cite[Remark 2.8.7]{EGNO}. We will thus incorporate associators everywhere and will not necessarily assume the category to be skeletal.

We do assume, however, strictness for the unit object $\mathbb 1$ of a monoidal category $\mathcal{C}$ in the sense that we require the unit morphism $\iota\colon \mathbb 1 \otimes \mathbb 1 \to \mathbb 1$ to be the identity, and for any $X \in \mathcal{C}$ we identify $\mathbb 1 \otimes X$ and $X\otimes \mathbb 1$ with $X$. This is a basic assumption also in \cite{EGNO}, see \cite[Remark 2.2.9]{EGNO}, and it is not problematic as was strictness because one can simply choose the skeleton appropriately, see \cite[Exercise 2.8.8]{EGNO}. 

We let $\Bbbk$ be a field. We do \emph{not} require $\Bbbk$ to be algebraically closed, and we do not make any assumptions on its characteristic. This level of generality is also addressed in \cite[Section 4.16]{EGNO}. Even though we can work with an algebraic closure of $\mathbb{Q}$, say, in a computer algebra system like OSCAR, the arithmetic will in most cases be slower than in a number field. It will thus be of advantage to work over smaller fields. Moreover, this will lead us to interesting rationality questions. Since terminology in the theory of tensor categories over general fields is in parts not yet entirely consistent throughout the literature, we will be very precise about our definitions below. In general, our basis will be \cite{EGNO}.

\subsection{Fusion categories}

We propose, and use, the following generalization of \cite[Definition 4.1.1]{EGNO} to an arbitrary base field. By a \emph{weak fusion category} over $\Bbbk$ we mean a $\Bbbk$-linear abelian semisimple rigid monoidal category $\mathcal{C}$ which has only finitely many non-isomorphic simple objects, the unit object is simple, the Hom-spaces are finite-dimensional over $\Bbbk$, and the bifunctor $\otimes$ is $\Bbbk$-bilinear.\footnote{Such a category is finite in the sense of \cite[Definition 1.8.6]{EGNO}, the requirement of enough projectives following from semisimplicity. Moreover, the simplicity of the unit object implies that the category is indecomposable as required in \cite[Definition 4.1.1]{EGNO}.} The Grothendieck ring $\Gr(\mathcal{C})$ of such a category is a \emph{weak fusion ring} as in \cite[Section 3.8]{EGNO}, see \cite[Proposition 4.2]{Etingof2012}, hence our terminology. 

We call an object $X \in \mathcal{C}$ \emph{scalar} if $\Bbbk \simeq \End_{\mathcal{C}}(X)$ via the canonical map. We say that $\mathcal{C}$ \emph{splits} if all simple objects of $\mathcal{C}$ are scalar, and then we call $\mathcal{C}$ a \emph{fusion category}. In case $\Bbbk$ is algebraically closed, this definition coincides with \cite[Definition 4.1.1]{EGNO}. Also, for a general field $\Bbbk$, it coincides with the definition in \cite[Section 1.G]{bruguieres2013center}, this paper being important for us since it generalizes many results and constructions from algebraically closed fields to arbitrary base fields (and rings). We want to incorporate the non-split case (with the notion of \emph{weak}) since this will happen for the center, even if we start with a split category.

\subsection{Coends}

If $\mathcal{C}$ is any category and $F\colon \mathcal C \times \mathcal C^{op} \to \mathcal C$ is a functor, then a \emph{coend} of $F$ is an object $C \in \mathcal{C}$ together with morphisms $c_X \colon F(X,X) \to C$ for each $X \in \mathcal{C}$ which are universal with the property that the diagram 
\begin{equation}
\begin{tikzcd}
    F(X,Y) \ar[r, "F{(f, \id_Y)}"] \ar[d, "F{(\id_X, f)}"] & F(Y,Y) \ar[d, "c_Y"] \\
    F(X,X) \ar[r, "c_X"] & C 
\end{tikzcd}
\end{equation}
commutes for each morphism $f\colon X \to Y \in \mathcal C$. The coend object $C$, if it exists, is unique up to isomorphism. We refer to \cite[Section IX.6]{MacLane1971} fore more details on coends. 

If $\mathcal C$ is a rigid monoidal category, then if the functor $\mathcal C\times \mathcal C^{op} \to \mathcal C$ defined by $(X,Y) \mapsto  X \otimes Y^\ast$ and $(f,g) \mapsto f \otimes g^\ast$, has a coend, it is called a (canonical) \emph{coend} of $\mathcal C$. It follows from \cite[Theorem 3.4]{Shimizu2017} that any weak fusion category $\mathcal{C}$ has a coend. If~$\mathcal{C}$ is a fusion category, a coend $C$ is explicitly given by
\begin{equation} \label{coend_for_fusion}
C = \bigoplus_{i=1}^n X_i \otimes X_i^* \;,
\end{equation}
where $X_1,\ldots,X_n$ is a complete set of representatives of the simple objects of $\mathcal{C}$, see \cite[Section 3D]{bruguieres2013center}.

\subsection{Dimensions}

If $\mathcal{C}$ is a pivotal monoidal category, we have the notions of \emph{left dimension} $\mathrm{dim}_l(X)$ and \emph{right dimension} $\mathrm{dim}_r(X)$ for any object $X \in \mathcal{C}$ given by, see \cite[Section 4.7]{EGNO}. These dimensions are elements of $\End_{\mathcal{C}}(\mathbb 1)$. 
If~$\mathcal{C}$ admits a coend $C$ and if $\mathcal  C$ is spherical, we set 
\begin{equation}
    \dim_L(\mathcal{C}) \coloneqq \dim_L(C) ,~~\dim_R(\mathcal C) \coloneqq \dim_R(C)
\end{equation}
and call this the (categorical) \emph{left (right) dimension} of~$\mathcal{C}$. This definition is as in \cite[Section 3D]{bruguieres2013center}. If~$\mathcal{C}$ is a pivotal fusion category, it follows from \eqref{coend_for_fusion} and the multiplicativity of dimension, \cite[Proposition 4.7.12]{EGNO}, that 
\begin{equation} \label{eq:dim_squares}
\dim(\mathcal C) \coloneqq \dim_L(\mathcal{C}) = \dim_R(C) = \sum_{i=1}^n \vert X_i \vert \in \End_{\mathcal{C}}(\mathbb 1) \simeq \Bbbk 
\end{equation}
where $\vert X_i \vert = \dim_L(X_i) \dim_R(X_i)$ is the squared norm of $X_i$, see \cite[Section 2.3]{Etingof2005}.
Our definition of $\dim(\mathcal{C})$ thus agrees with \cite[Definition 7.21.3]{EGNO}. 

\subsection{Modular categories} \label{sec:modular}

By a \emph{pre-modular (weak) fusion category} we mean a braided spherical (weak) fusion category. To such a category $\mathcal{C}$ we associate a matrix $S$ defined as follows: if $X_1,\ldots,X_n$ are the simple objects of $\mathcal{C}$ up to isomorphism, then $S$ is the square matrix $S$ of size $n$ with entries
\begin{equation}
S_{ij} = \mathrm{Tr}(c_{X_j X_i} c_{X_i X_j}) \in \Bbbk \;.
\end{equation}
Here, $c_{X_i X_j}$ denotes the braiding $X_i \otimes X_j \to X_j \otimes X_i$ and $\mathrm{Tr}$ denotes the trace \cite[Equation 8.40]{EGNO}, which relies on the spherical structure. We call~$S$ the \emph{$S$-matrix} of $\mathcal{C}$. We say that a pre-modular category is \emph{modular} if its $S$-matrix is invertible. Our definitions of pre-modular, the $S$-matrix, and modular are the same as in \cite{EGNO} (Definitions 8.13.1, 8.13.2, and 8.14.1), but we note that in \cite{EGNO} all this is only considered in case where $\Bbbk$ is algebraically closed of characteristic zero. We nonetheless think they make sense in general, and they will be convenient for stating some facts. But we emphasize that our work does not depend on these definitions. In \cite[Section 3E]{bruguieres2013center} the notion of a modular category over a general field (or ring) is defined which involves a non-degeneracy condition for a pairing defined using the coend. A modular category as defined here is modular as defined in \cite{bruguieres2013center} by \cite[Remark 3.4]{bruguieres2013center}.

\subsection{Structure of the center}

The key fact about the center we use here is the following.

\begin{theorem}[{Bruguières--Virelizier \cite{bruguieres2013center}}] \label{center_main_theorem}
    Let $\mathcal{C}$ be a pivotal fusion category over a field~$\Bbbk$ with $\dim(\mathcal{C}) \neq 0$. Then $\mathcal{Z}(\mathcal{C})$ is a pre-modular weak fusion category and 
    \begin{equation} \label{eq:center_dimension}
        \dim(\mathcal{Z}(\mathcal{C})) = \dim(\mathcal{C})^2 \;.
    \end{equation}
If $\mathcal{Z}(\mathcal{C})$ splits, it is a modular fusion category.
\end{theorem}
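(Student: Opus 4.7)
The plan is to follow the coend-theoretic approach of Bruguières--Virelizier. To start, I would verify directly that $\mathcal{Z}(\mathcal{C})$ is a $\Bbbk$-linear abelian rigid braided monoidal category. The braiding is tautological, namely $c_{(X,\gamma),(Y,\delta)} = \gamma(Y)$, and the hexagon axioms reduce exactly to the half-braiding coherence diagram from the introduction. Duals $(X,\gamma)^* = (X^*, \gamma^*)$ are obtained by transporting $\gamma$ through the duality of $\mathcal{C}$, and the unit $\mathbb 1$ carries the identity half-braiding. The abelian structure descends from $\mathcal{C}$ because the forgetful functor $F \colon \mathcal{Z}(\mathcal{C}) \to \mathcal{C}$ creates limits and colimits.

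Next I would introduce the induction functor $I \colon \mathcal{C} \to \mathcal{Z}(\mathcal{C})$ left adjoint to $F$, given on objects by $I(X) = \bigoplus_i X_i \otimes X \otimes X_i^*$ with the canonical half-braiding built from the coend $C$ of \eqref{coend_for_fusion}. The hypothesis $\dim(\mathcal{C}) \neq 0$ is used to construct a separability idempotent that splits the unit of the adjunction $I \dashv F$, so that every object of $\mathcal{Z}(\mathcal{C})$ is a direct summand of some $I(X)$. Together with semisimplicity and finiteness of $\mathcal{C}$, this forces $\mathcal{Z}(\mathcal{C})$ to have only finitely many isomorphism classes of simples and to be semisimple, hence a weak fusion category. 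The spherical structure lifts from $\mathcal{C}$: the pivotal isomorphism $X \to X^{**}$ is automatically a morphism in $\mathcal{Z}(\mathcal{C})$ with respect to the induced half-braiding on $X^{**}$, and sphericality descends because traces can be computed after applying $F$. Combined with the tautological braiding this exhibits $\mathcal{Z}(\mathcal{C})$ as weak pre-modular.

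For the dimension formula, the key ingredient is Frobenius reciprocity $\mathrm{Hom}_{\mathcal{Z}(\mathcal{C})}(I(X), Z) \simeq \mathrm{Hom}_{\mathcal{C}}(X, F(Z))$ combined with the identity $F(I(X)) \simeq C \otimes X$ in $\mathcal{C}$, which yields $\dim(I(X_i)) = \dim(\mathcal{C}) \cdot \dim(X_i)$ in $\End_{\mathcal{C}}(\mathbb 1) \simeq \Bbbk$. Expressing each simple of $\mathcal{Z}(\mathcal{C})$ as a summand of some $I(X_i)$, computing multiplicities via reciprocity, and applying \eqref{eq:dim_squares} to $\mathcal{Z}(\mathcal{C})$ then yields $\dim(\mathcal{Z}(\mathcal{C})) = \dim(\mathcal{C})^2$. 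For the last statement, if $\mathcal{Z}(\mathcal{C})$ splits, the modularity criterion of \cite{bruguieres2013center} (non-degeneracy of the coend pairing) coincides with invertibility of the $S$-matrix as defined in Section~\ref{sec:modular}, and the pairing is non-degenerate precisely because the factorization $\dim(\mathcal{Z}(\mathcal{C})) = \dim(\mathcal{C})^2 \neq 0$ rules out null-directions.

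The main obstacle I expect is the semisimplicity step: in the non-algebraically-closed, possibly positive-characteristic setting the classical Maschke-style averaging argument is unavailable, so one must construct the separability idempotent explicitly from the coend using the full force of $\dim(\mathcal{C}) \neq 0$ and a careful tracking of associators throughout. Once this is in place, the remaining steps are largely formal manipulations with the adjunction $I \dashv F$ and the standard dimension count \eqref{eq:dim_squares}.
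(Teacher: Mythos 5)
The paper does not prove this theorem itself; the paragraph following the statement is a chain of citations: the pivotal, spherical, and braided structures transfer to $\mathcal{Z}(\mathcal{C})$ by \cite[Section 1J]{bruguieres2013center} and \cite[Theorem 3.6]{Shimizu2017}, semisimplicity of $\mathcal{Z}(\mathcal{C})$ is equivalent to $\dim(\mathcal{C})\neq 0$ by \cite[Corollary 2.3]{bruguieres2013center}, the dimension formula is \cite[Theorem 2.1]{bruguieres2013center}, and the split case is modular by combining that theorem with \cite[Remark 3.4]{bruguieres2013center}. Your sketch instead outlines the underlying Bruguières--Virelizier argument (coend, induction adjunction, separability from $\dim(\mathcal{C})\neq 0$), and the overall strategy you describe is the right one.

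There is, however, one genuine gap in your dimension count. You close the argument by ``applying \eqref{eq:dim_squares} to $\mathcal{Z}(\mathcal{C})$,'' but that identity is derived from the explicit coend description \eqref{coend_for_fusion}, which is valid only for a \emph{split} fusion category. As the paper stresses (and as Theorem~\ref{thm:ising} and Section~\ref{sec:splitting} illustrate), $\mathcal{Z}(\mathcal{C})$ may fail to split even when $\mathcal{C}$ does, and for a non-split simple object $Z_j$ the dinaturality constraint forces the coend to be a proper quotient of $\bigoplus_j Z_j\otimes Z_j^\ast$, so $\dim(\mathcal{Z}(\mathcal{C}))=\sum_j\dim(Z_j)^2$ is not available. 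To repair this you should either invoke the coend-level proof of \cite[Theorem 2.1]{bruguieres2013center}, which is carried out over an arbitrary base ring without a splitness hypothesis, or first pass to a splitting field, apply \eqref{eq:dim_squares} there, and use that the categorical dimension is preserved under scalar extension. Two smaller slips: to conclude that every central object is a summand of some $I(X)$ you need to split the unit of $F\dashv I$ (equivalently the counit of $I\dashv F$), not the unit of $I\dashv F$ as written; and $FI(X)=\bigoplus_i X_i\otimes X\otimes X_i^\ast$ is \emph{not} isomorphic to $C\otimes X=\bigoplus_i X_i\otimes X_i^\ast\otimes X$ in $\mathcal{C}$, since $\mathcal{C}$ is not assumed braided --- they merely have the same categorical dimension, which fortunately is all your calculation uses.
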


Let us make some remarks on this fact. It is a deep conjecture \cite[Conjecture 2.8]{Etingof2005} that any fusion category over an algebraically closed field of characteristic zero is pivotal. As far as we are aware, all known fusion categories are spherical. We note, however, that a given pivotal structure may not be spherical, see \cite[Example 4.59]{Tubbenhauer22}. If $\Bbbk$ is of characteristic zero, the condition $\dim(\mathcal{C}) \neq 0$ is always true by \cite[Theorem 2.3]{Etingof2005}.\footnote{In \cite{Etingof2005} the base field is assumed to be algebraically closed. But our definition of a fusion category requires the category to be split, and then we can argue by scalar extension.} All the relevant structures on $\mathcal{C}$ (direct sums, linearity, duals, pivotal, spherical) naturally transfer to $\mathcal{Z}(\mathcal{C})$, see \cite[Section 1J]{bruguieres2013center} and \cite[Theorem 3.6]{Shimizu2017}. The only property left to establish that $\mathcal{Z}(\mathcal{C})$ is a weak fusion category is semisimplicity, and this is indeed not true in general, see \cite[Example 2.5]{bruguieres2013center}. In fact, it is proven in \cite[Corollary 2.3]{bruguieres2013center} that $\mathcal{Z}(\mathcal{C})$ is semisimple if and only if $\dim(\mathcal{C}) \neq 0$. So, if $\dim(\mathcal{C}) \neq 0$, it follows that $\mathcal{Z}(\mathcal{C})$ is a weak pre-modular fusion category. The relation $\dim(\mathcal{Z}(\mathcal{C})) =  \dim(\mathcal{C})^2$ was shown in \cite[Theorem 2.1]{bruguieres2013center}. Finally, if $\mathcal{Z}(\mathcal{C})$ splits, it follows from \cite[Theorem 2.1]{bruguieres2013center} and our remarks in Section \ref{sec:modular} that it is a modular category.

We note that in general it is not true that $\mathcal{Z}(\mathcal{C})$ splits: using our software we will show in Section \ref{sec:software} that the center of the Ising category over $\mathbb{Q}(\sqrt{2})$ is a non-split weak modular category. We note that the phenomenon of non-splitting of the center of a split fusion category is known, see e.g. \cite{morrison2012non}. We hope that our software will help to explore further examples of this exciting phenomenon. Finally, we note that the above theorem in case of an algebraically closed field was established much earlier by Müger \cite{subfactors}. The upshot of \cite{bruguieres2013center} was to establish it for any field.

\subsection{\texorpdfstring{$F$}{F}-symbols and \texorpdfstring{$R$}{R}-symbols} \label{sec:F_symbols}


Let $\mathcal C$ be a fusion category with simple objects $X_1,...,X_r$. Consider the multiplicity spaces $H_{i,j}^k := \Hom(X_i \otimes X_j, X_k)$ and fix a basis for every combination. We represent such a basis morphism $f \in H_{i,j}^k$ as a string diagram 

\begin{align}
    f = \tzBasisVector{i}{j}{k}{f}
\end{align}
and hence there are canonical bases for $\Hom((X_i \otimes X_j) \otimes X_k, X_l)$ and $\Hom(X_i \otimes (X_j \otimes X_k), X_l)$ given by morphisms of the form 

\begin{align}
     \tzCanonicalBasisLeft{X_i}{X_j}{X_k}{X_l}{X_m}{\alpha}{\beta}~~\text{and}~~\tzCanonicalBasisRight{X_i}{X_j}{X_k}{X_l}{X_n}{\gamma}{\delta}. \label{eq:canonical_bases}
\end{align}
The associator $a_{X_i,X_j,X_k}$ implies an isomorphism on these spaces by 

\begin{align}
    \begin{aligned}
    \Hom((X_i \otimes X_j) \otimes X_k, X_l) &\to \Hom(X_i \otimes (X_j \otimes X_k), X_l) \\f &\mapsto f \circ a_{X_i,X_j,X_k}
    \end{aligned}
\end{align}
which is just a linear transformation, i.e. we can express it by 

\begin{align}
    \tzCanonicalBasisLeft{X_i}{X_j}{X_k}{X_l}{X_m}{\alpha}{\beta} \circ a_{X_i,X_j,X_k} = \sum\limits_{n = 1}^r \sum\limits_{\substack{\gamma \in H_{j,k}^n \\ \delta \in H_{i,n}^l}} \left[F_{i,j,k}^l\right]_{n,\gamma,\delta}^{m,\beta,\alpha} \tzCanonicalBasisRight{X_i}{X_j}{X_k}{X_l}{X_n}{\gamma}{\delta}\;.
\end{align}
The entries of the matrices $F_{i,j,k}^l$ are called \emph{$F$-symbols} of $\mathcal C$. Similarly, the braiding isomorphisms $\sigma_{X,Y}\colon X \otimes Y \to Y \otimes X$ give rise to a basis transformation 

\begin{align}
    \begin{aligned}
    \Hom(X_i \otimes X_j, X_k) &\to \Hom(X_j \otimes X_i, X_k) \\ f &\mapsto f \circ \sigma_{X_i,X_j}^{-1} 
    \end{aligned}
\end{align}
which again can be expressed in the bases as 

\begin{align}
    \tzBasisVector{X_i}{X_j}{X_k}{\alpha} \circ \sigma_{X_i,X_j}^{-1} = \sum\limits_{\beta \in H_{j,i}^k} \left[R_{i,j}^k\right]_\beta^\alpha \tzBasisVector{X_j}\;.{X_i}{X_k}{\beta}
\end{align}
Here the entries of $R_{i,j}^k$ are referred to as \emph{$R$-symbols}. 

\begin{remark}
    Note that we started with a fusion category, i.e. a split semisimple category. This is necessary for the $F$-symbols to be scalars. If we allowed non-split (weak fusion) categories, the resulting entries would be morphisms in $\End(X_i)$, i.e. elements of some simple algebra over $\mathbb k$. 
\end{remark}


\subsection{Computing the center}

We can now make precise what we mean by ``computing the center'': given a spherical fusion category $\mathcal{C}$ over a field $\Bbbk$ with $\dim(\mathcal{C}) \neq 0$, we want to determine a complete set of representatives of the simple objects of $\mathcal{Z}(\mathcal{C})$. Since $\mathcal{Z}(\mathcal{C})$ is semisimple, this gives complete knowledge of $\mathcal{Z}(\mathcal{C})$. The resulting objects then are given as defined in the form of tuples $(X,\gamma)$ where $X \in \mathcal C$ and $\gamma$ is a half-braiding. With this representation we can compute explicit tensor products, direct sums and Hom-spaces.  

In our computational perspective that we take on this problem we actually want to achieve more. First, to work explicitly with such a category $\mathcal{C}$ in the computer we need to assume we have some sort of finite encoding (model) of $\mathcal{C}$. One, but not the only, possibility would be via the multiplication table of the Grothendieck ring $\mathrm{Gr}(\mathcal{C})$ of $\mathcal{C}$ and the $F$-symbols of $\mathcal{C}$, which encode the associators via a collection of matrices \cite[Section 4.9]{EGNO}. Determining such a model requires hard work in most examples, but we assume it exists and start from there. Our software provides a general framework to work with models of categories, the specifics being up to the user. We have already implemented models of several standard examples, e.g. graded vector spaces, representation categories for finite groups, Verlinde type categories and some more to find in the documentation \cite{Maeurer_TensorCategories_jl}. Moreover, all the fusion categories from the AnyonWiki \cite{AnyonWiki} are available. We can now formulate our precise goal.

\begin{tcolorbox}
    \textbf{Goal}. We want to compute a model of $\mathcal{Z}(\mathcal{C})$ in terms of the model of the original category $\mathcal{C}$, avoiding abstract equivalences and the like. In particular, we want to compute the simple objects of $\mathcal{Z}(\mathcal{C})$ explicitly as pairs of an object together with a half-braiding. From these we can compute $F$-symbols and $R$-symbols.
\end{tcolorbox}

\begin{example}
Let us illustrate our goal in a simple example. Let $G$ be a finite group and let $\Bbbk$-$\mathsf{Vec}_G$ be the category of finite-dimensional $G$-graded vector spaces over $\Bbbk$, see \cite[Example 2.3.6]{EGNO}. This is a spherical fusion category with simple objects $\{ \delta_g \}_{g \in G}$, where~$\delta_g$ is the 1-dimensional vector space concentrated in degree $g$. Here, the associators are trivial. There are versions $\Bbbk$-$\mathsf{Vec}_G^\omega$  with non-trivial associator as well, see \cite[Remark 2.6.2]{EGNO}. It follows from Equation \eqref{eq:dim_squares} that $\dim(\Bbbk$-$\mathsf{Vec}_G) = |G| \cdot 1_{\Bbbk}$. The center $\mathcal{Z}(\Bbbk$-$\mathsf{Vec}_G)$ is well-known \cite[Example 8.5.4]{EGNO}: the simple objects are parametrized by pairs $(C,V)$, where $C$ is a conjugacy class in $G$ and $V$ is an irreducible finite-dimensional representation of the centralizer of an element $g \in C$. 

But actual central objects are pairs $(X,\gamma_X)$ of a $G$-graded vector space $X$ and a half-braiding $\gamma_X$ for $X$. How do the actual simple central objects look like? Answering this is our goal, and precisely this is done by our algorithm and software implementation. We give a concrete example in Figure \ref{vecGcenter}. Note that it is very easy to model $\Bbbk$-$\mathsf{Vec}_G$ in the computer: we just need the group $G$, and we describe morphisms as certain matrices over $\Bbbk$.

\begin{figure}[htbp]
    \renewcommand*{\arraystretch}{1.5}
    \tiny\centering
    \[
    \begin{array}{c||c|c|c|c|c|c}
            & (12)  & (13) & (23)   & (123)     & (132) \\ \hline\hline
        ()  & 1     & 1    & 1      & 1         & 1     \\ \hline
        ()  & -1    & -1   & -1     & 1         & 1     \\ \hline
        2\cdot()& \mat{1&0\\-1&-1} & \mat{0&1\\1&0} & \mat{-1&-1\\0&1} & \mat{-1&-1\\1&0} & \mat{0&1\\-1&-1} \\ \hline
        (23) \oplus (12) \oplus (13) & \mat{0&0&1\\0&1&0\\1&0&0} & \mat{0&1&0\\1&0&0\\0&0&1} & \mat{1&0&\\0&0&1\\0&1&0} & \mat{0&1&0\\0&0&1\\1&0&0} & \mat{0&0&1\\1&0&0\\0&1&0} \\ \hline
        (23) \oplus (12) \oplus (13) & \mat{0&0&-1\\0&-1&0\\-1&0&0} & \mat{0&1&0\\1&0&0\\0&0&-1} & \mat{-1&0&\\0&0&1\\0&1&0} & \mat{0&-1&0\\0&0&-1\\1&0&0} & \mat{0&0&1\\-1&0&0\\0&-1&0} \\ \hline
        (132)\oplus(123) & \mat{0&1\\1&0} & \mat{0&1\\1&0} & \mat{0&1\\1&0} & \id & \id \\ \hline
        (132)\oplus(123) & \mat{0&\xi_3\\\xi_3^2&0} & \mat{0&1\\1&0} & \mat{0&\xi_3^2\\\xi_3&0} & \mat{0&\xi_3^2\\\xi_3&0} & \mat{0&\xi_3\\\xi_3^2&0} \\ \hline
        (132)\oplus(123) & \mat{0&\xi_3^2\\\xi_3&0} & \mat{0&1\\1&0} & \mat{0&\xi_3\\\xi_3^2&0} &  \mat{0&\xi_3\\\xi_3^2&0} & \mat{0&\xi_3^2\\\xi_3&0}
    \end{array}
    \]
    \caption{The actual center of $\Bbbk$-$\mathsf{Vec}_G$ for $G$ the symmetric group $S_3$ and $\Bbbk = \mathbb{Q}(\xi_3)$, where $\xi_3$ is a primitive third root of unity. The table lists the simple central objects $(X,\gamma_X)$. The first column lists the underlying object $X$ of $\Bbbk$-$\mathsf{Vec}_G$. The other columns are indexed by the simple objects $Y$ of $\Bbbk$-$\mathsf{Vec}_G$ and specify the half-braiding $\gamma_X(Y)$. The data in this table is readily provided by our software.} \label{vecGcenter}
\end{figure}

\end{example}



\section{Computing half-braidings} \label{sec:naive}

Before we discuss our algorithm and software implementation, we first want to discuss the computation of half-braidings in order to illustrate the complexity of this problem. 

Note that we have a forget functor
\begin{equation} \label{eq:forget}
    F \colon \mathcal{Z}(\mathcal{C}) \rightarrow \mathcal{C}, \quad (X,\gamma_X) \mapsto X \;.
\end{equation}
The image of the induced map $\mathrm{Gr}(\mathcal{Z}(\mathcal{C})) \to \mathrm{Gr}(\mathcal{C})$ between Grothendieck rings obviously lies in the center of $\mathrm{Gr}(\mathcal{C})$. In the example in Figure \ref{vecGcenter} we can observe two crucial peculiarities of the center:
\begin{enumerate}
\item If $(X,\gamma_X)$ is a simple object of $\mathcal{Z}(\mathcal{C})$, then $X$ is not necessarily simple in $\mathcal{C}$. 
\item If $(X,\gamma_X)$ is a simple object of $\mathcal{Z}(\mathcal{C})$ there may exist another non-isomorphic half-braiding $\gamma_X'$ for $X$, so that $(X,\gamma_X')$ is another simple object of $\mathcal{Z}(\mathcal{C})$. In other words, there may be several non-isomorphic simple objects of $\mathcal{Z}(\mathcal{C})$ lying above the same object of $\mathcal{C}$ under the forget functor.
\end{enumerate}

It seems a possible strategy to compute the center would be to inductively form direct sums $Z$ of the simple objects in $\mathcal{C}$ and determine the half-braidings on $Z$. The inductive procedure would ensure that this produces simple central objects and, over a splitting field, one could use the dimension formula \eqref{eq:center_dimension} with the coend formula \eqref{coend_for_fusion} to establish that one found all simple central objects. However, we will now argue that the determination of half-braidings and their isomorphism classes as objects of the center seems computationally infeasible. 

First, semisimplicity of $\mathcal{C}$ allows one to encode a half-braiding $\gamma_Z$ by a finite amount of information, namely by the maps $\gamma_Z(X_i)$ for the simple objects $X_i$ of $\mathcal{C}$. We used this already in the example in Figure \ref{vecGcenter}. The precise statement is as follows.

\begin{lemma}[{Müger \cite[Lemma 3.3]{subfactors}}]\label{discreteCondition}
	Let $\mathcal C$ be a fusion category with simple objects $\{X_i\}$. Let $Z\in\mathcal C$. There is a bijection between half-braidings for $Z$ and families of morphisms $\{\gamma_{Z}(X_i) \in \Hom_{\mathcal{C}}(Z\otimes X_i, X_i\otimes Z)\}$ such that for all $i,j,k$ and $t\in \Hom_{\mathcal{C}}(X_k,X_i\otimes X_j)$ the diagram 
    \begin{equation}
	\begin{tikzcd}[column sep = large]
		Z\otimes X_k \ar[d, "\id_Z\otimes t" left] \ar[r, "\gamma_Z(X_k)"] & X_k\otimes Z \ar[r, "t \otimes \id_Z"] & (X_i\otimes X_j) \otimes Z \ar[d, "a_{X_i,X_j,Z}"]\\
		Z\otimes (X_i\otimes X_j) \ar[d, "a_{Z,X_i,X_j}^{-1}"] 	&  & X_i \otimes (X_j\otimes Z) \\
        (Z\otimes X_i)\otimes X_j \ar[r, "\gamma_Z(X_i) \otimes \id_{X_j}"] & (X_i\otimes Z) \otimes X_j \ar[r, "a_{X_i,Z,X_j}"] & X_i \otimes (Z\otimes X_j) \ar[u, "\id_{X_i} \otimes \gamma_Z(X_j)"]
	\end{tikzcd}
    \end{equation}
    commutes and $\gamma_Z(\mathbb 1) = \id_Z$.
\end{lemma}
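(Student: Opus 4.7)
The plan is to establish the bijection by constructing both maps and checking they are mutually inverse. In one direction, given a half-braiding $\gamma_Z = \{\gamma_Z(Y)\}_{Y \in \mathcal C}$ in the sense of the introduction, one simply restricts to the simple objects to obtain the family $\{\gamma_Z(X_i)\}$. The unit condition is inherited from the definition, and the hexagon of the lemma follows by substituting $Y = X_i$ and $Z = X_j$ into the tensor-compatibility hexagon of the half-braiding and then composing with the naturality square of $\gamma_Z$ along $t \colon X_k \to X_i \otimes X_j$, which gives $(t \otimes \id_Z) \circ \gamma_Z(X_k) = \gamma_Z(X_i \otimes X_j) \circ (\id_Z \otimes t)$. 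This direction amounts to bookkeeping.

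The substantive direction is the converse: given a family $\{\gamma_Z(X_i)\}$ satisfying the hypothesis, one must extend it to a half-braiding on all of $\mathcal C$. Since $\mathcal C$ is semisimple, every object $Y$ decomposes as $Y \simeq \bigoplus_i X_i^{n_i}$, and one defines $\gamma_Z(Y)$ to act as $\gamma_Z(X_i)$ on each simple summand. Naturality in $Y$ forces the extension to arbitrary morphisms and, in particular, guarantees that the resulting map is independent of the chosen decomposition, since any two decompositions differ by an isomorphism in $\mathcal C$ through which $\gamma_Z$ transports functorially. Invertibility of each $\gamma_Z(Y)$ reduces to invertibility of the $\gamma_Z(X_i)$, which in a rigid category follows by applying the hypothesis with $X_j = X_i^{\ast}$ and $t = \coev_{X_i}$ to produce a one-sided, hence two-sided, inverse in the finite-dimensional $\Hom$-spaces.

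The main obstacle, and the only nontrivial verification, is that this extension actually satisfies the tensor-compatibility hexagon of a half-braiding for arbitrary $Y, Y' \in \mathcal C$. By semisimplicity and bilinearity of $\otimes$ it suffices to check this for $Y = X_i$ and $Y' = X_j$. For each simple $X_k$ fix a basis $\{t_\alpha\}$ of $\Hom_{\mathcal C}(X_k, X_i \otimes X_j)$; since $\mathcal C$ is a fusion category one has $\End_{\mathcal C}(X_k) \simeq \Bbbk$, so one may choose $s_\alpha \in \Hom_{\mathcal C}(X_i \otimes X_j, X_k)$ with $s_\beta \circ t_\alpha = \delta_{\alpha\beta}\id_{X_k}$, and the resulting resolution of the identity $\id_{X_i \otimes X_j} = \sum_{k,\alpha} t_\alpha \circ s_\alpha$ may be inserted into the hexagon. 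Careful tracking of the associators reduces the hexagon for $(X_i, X_j)$ to precisely the six-term diagram of the hypothesis being valid for each $t_\alpha$ separately. Once this reduction is in place, the two constructions are tautologically mutually inverse: restriction to simples followed by extension returns the original $\gamma_Z$ by naturality, and extension followed by restriction returns the original family by construction.
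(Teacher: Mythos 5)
The paper itself does not give a proof of this lemma: it defers to Müger \cite[Lemma 3.3]{subfactors} and only adds a remark that the single non-obvious point is that the morphisms $\gamma_Z(X_i)$, which the statement does not a priori require to be invertible, are automatically isomorphisms via rigidity and \cite[Lemma 2.2]{subfactors}. Your proposal reconstructs essentially that proof, so the overall strategy is the intended one; a few steps deserve to be tightened.

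First, your independence-of-decomposition argument is stated circularly: you say ``naturality in $Y$'' guarantees independence, but naturality of the extended $\gamma_Z$ is precisely what you are in the middle of establishing. The correct (and easy) way to close this is Schur's lemma: any two isomorphisms $\bigoplus_i X_i^{n_i} \xrightarrow{\sim} Y$ differ by an automorphism of $\bigoplus_i X_i^{n_i}$, which is block-diagonal with scalar blocks since $\End(X_i) \simeq \Bbbk$, and $\bigoplus_i \gamma_Z(X_i)^{\oplus n_i}$ trivially commutes with scalar matrices. Naturality for arbitrary $f \colon Y \to Y'$ then follows by the same matrix decomposition, without invoking the hexagon hypothesis at all.

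Second, the invertibility argument is sketched too loosely. Since $\gamma_Z(X_i) \colon Z \otimes X_i \to X_i \otimes Z$ is not an endomorphism, the assertion ``one-sided, hence two-sided, inverse in finite-dimensional Hom-spaces'' does not apply directly; a one-sided inverse only gives a split mono/epi. What one should do (as in Müger's Lemma 2.2) is apply the hexagon with \emph{both} $t = \coev_{X_i} \colon \mathbb 1 \to X_i \otimes X_i^{\ast}$ and the corresponding (co)evaluation for ${}^{\ast}X_i$, which together with $\gamma_Z(\mathbb 1) = \id_Z$ produce both a left and a right inverse to $\gamma_Z(X_i)$ built from $\gamma_Z(X_i^{\ast})$, $\gamma_Z({}^{\ast}X_i)$ and the duality morphisms. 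Alternatively, once one knows $\gamma_Z(X_i)$ has a one-sided inverse, one can conclude it is an isomorphism by comparing dimensions of $\Hom$-spaces in the semisimple category, but either way the step needs to be spelled out rather than asserted.
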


\begin{remark}
This statement is proven in \cite{subfactors} more generally for a semisimple monoidal category. In \cite{subfactors} the base field is assumed to be algebraically closed but this does not play a role in the proof. In \cite{subfactors} monoidal categories are assumed to be strict. We have given the non-strict version here. The statement and its proof are straightforward but there is one important detail which is less obvious: half-braidings need to be isomorphisms but we just consider morphisms $\gamma_Z(X_i)$ without requiring invertibility. The fact that we automatically get isomorphisms is due to the existence of dual objects in $\mathcal{C}$ (rigidity) and \cite[Lemma 2.2]{subfactors}.
\end{remark}

Lemma \ref{discreteCondition} allows us to set up a system of equations for half-braidings $\gamma_Z$ for $Z \in \mathcal{C}$. Let $X_i,X_j,X_k$ be simple objects of $\mathcal{C}$ and let $t \in \Hom_{\mathcal{C}}(X_k, X_i \otimes X_j)$. Then the lemma states an equation 
\begin{equation}
   \phi(\gamma_Z(X_k),t) = \psi(\gamma_Z(X_i),\gamma_Z(X_j),t) \;,
\end{equation}
where 
\begin{equation}
    \phi(\gamma_Z(X_k),t) =  a_{X_i,X_j,X_k} \circ (t \otimes \id_Z) \circ \gamma_Z(X_k)
\end{equation}
and
\begin{equation}
    \psi(\gamma_Z(X_i),\gamma_Z(X_j),t) = (\id_{X_i} \otimes \gamma_Z(X_j)) \circ a_{X_i,Z,X_j} \circ (\gamma_Z{X_i} \otimes \id_{X_j}) \circ a_{Z,X_i,X_j}^{-1} \circ (\id_Z \otimes t) \;.
\end{equation}

Clearly $\phi$ and $\psi$ are linear in $t$ as well as in $\gamma_Z(X_i),\gamma_Z(X_j)$, and $\gamma_Z(X_k)$ respectively. Thus, whenever the equations hold for a basis of $\Hom_{\mathcal{C}}(X_k, X_i \otimes X_j)$, they hold for all $t$. After choosing a basis $f_1,...,f_r$ for $\Hom(Z \otimes X_k, X_i \otimes (Z \otimes X_j))$ and bases $g_1^l,...,g_{r_l}^l$ for $\Hom_{\mathcal{C}}(Z\otimes X_l, X_l \otimes Z)$ we can replace $\gamma_Z(X_l)$ with 
\begin{equation}
    \gamma_Z(X_l) = a_{1}^lg_{1}^l + \cdots + a_{r_l}^lg_{r_l}^l
\end{equation}
and get a system of \emph{quadratic} equations by comparing coefficients. Writing out the full details is technical, but we have implemented this system in our software framework and the exact details can be found in the source code. Using algebraic solvers like msolve \cite{msolve} this, in principle, yields an approach to computing the half-braidings for a fixed object. However, as we illustrate in Example \ref{ex:half_braidings_ideal} below, even in very small examples, the systems are extremely complicated and usually form a positive-dimensional ideal, so that there are infinitely many solutions and msolve cannot be used. If the base field is not a finite field or the field of rational numbers, the computational situation is even worse. Also, even if we have found (a parametric form of) the solutions, we still need to determine their isomorphism classes as objects of the center. Hence, without further simplifications from theory (which we do not know) this approach seems infeasible. 

\begin{example} \label{ex:half_braidings_ideal}
Consider the category $\Bbbk$-$\mathsf{Vec}_{G}$ for $G = S_3$ and $\Bbbk = \QQ$. Consider the object $Z = (23) \oplus (12) \oplus (13)$. We can see in Figure \ref{vecGcenter}, which we computed using our algorithm from the next section, that there are exactly two non-isomorphic half-braidings for $Z$. Note that they are indeed defined over $\QQ$, we do not need the bigger field $\QQ(\xi_3)$ as in Figure \ref{vecGcenter}. The ideal defined by the equations for half-braidings on $Z$ is given in Figure \ref{fig:half_braidings_ideal}. It is generated by 78 polynomials in $18 = 6 \cdot 3$ indeterminates. Its dimension is 2. In this (simple) example, one could derive a parametric solution from the Gröbner basis.
\begin{figure}[htbp]
\tiny
\begin{align*} 
\begin{autobreak}
\langle
x_{1} - x_{4}^2,
x_{2} - x_{5}x_{6},
x_{3} - x_{5}x_{6},
x_{1} - x_{7}x_{9},
x_{2} - x_{8}^2,
x_{3} - x_{7}x_{9},
x_{1} - x_{10}x_{14},
x_{2} - x_{11}x_{15},
x_{3} - x_{12}x_{13},
x_{1} - x_{12}x_{13},
x_{2} - x_{10}x_{14},
x_{3} - x_{11}x_{15},
x_{1} - x_{16}x_{17},
x_{2} - x_{16}x_{17},
x_{3} - x_{18}^2,
x_{4} - x_{7}x_{12},
x_{5} - x_{8}x_{11},
x_{6} - x_{9}x_{10},
x_{4} - x_{10}x_{17},
x_{5} - x_{11}x_{18},
x_{6} - x_{12}x_{16},
x_{4} - x_{9}x_{13},
x_{5} - x_{7}x_{14},
x_{6} - x_{8}x_{15},
x_{4} - x_{14}x_{16},
x_{5} - x_{13}x_{17},
x_{6} - x_{15}x_{18},
-x_{4}x_{13} + x_{7},
-x_{5}x_{15} + x_{8},
-x_{6}x_{14} + x_{9},
-x_{5}x_{10} + x_{7},
-x_{6}x_{11} + x_{8},
-x_{4}x_{12} + x_{9},
x_{7} - x_{13}x_{18},
x_{8} - x_{14}x_{16},
x_{9} - x_{15}x_{17},
x_{7} - x_{11}x_{16},
x_{8} - x_{10}x_{17},
x_{9} - x_{12}x_{18},
-x_{4}x_{16} + x_{10},
-x_{5}x_{18} + x_{11},
-x_{6}x_{17} + x_{12},
-x_{6}x_{7} + x_{10},
-x_{5}x_{8} + x_{11},
-x_{4}x_{9} + x_{12},
x_{10} - x_{13}x_{15},
x_{11} - x_{13}x_{14},
x_{12} - x_{14}x_{15},
-x_{8}x_{16} + x_{10},
-x_{7}x_{17} + x_{11},
-x_{9}x_{18} + x_{12},
-x_{4}x_{7} + x_{13},
-x_{5}x_{9} + x_{14},
-x_{6}x_{8} + x_{15},
-x_{7}x_{18} + x_{13},
-x_{8}x_{17} + x_{14},
-x_{9}x_{16} + x_{15},
-x_{10}x_{11} + x_{13},
-x_{11}x_{12} + x_{14},
-x_{10}x_{12} + x_{15},
-x_{5}x_{16} + x_{13},
-x_{4}x_{17} + x_{14},
-x_{6}x_{18} + x_{15},
-x_{4}x_{10} + x_{16},
-x_{5}x_{12} + x_{17},
-x_{6}x_{11} + x_{18},
-x_{7}x_{15} + x_{16},
-x_{8}x_{14} + x_{17},
-x_{9}x_{13} + x_{18},
-x_{8}x_{10} + x_{16},
-x_{9}x_{11} + x_{17},
-x_{7}x_{12} + x_{18},
-x_{6}x_{13} + x_{16},
-x_{4}x_{14} + x_{17},
-x_{5}x_{15} + x_{18},
x_{1} - 1,
x_{2} - 1,
x_{3} - 1
\rangle
\end{autobreak} \\
= 
\begin{autobreak}
    \langle
x_{18}^2 - 1, 
x_{16}x_{17} - 1, 
x_{14} - x_{17}x_{18}, 
x_{13}x_{15} - x_{14}x_{16}^2, 
x_{12} - x_{14}x_{15}, 
x_{11} - x_{13}x_{14}, 
x_{10} - x_{13}x_{15}, 
x_{9} - x_{12}x_{18}, 
x_{8} - x_{10}x_{17}, 
x_{7} - x_{11}x_{16}, 
x_{6} - x_{15}x_{18}, 
x_{5} - x_{13}x_{17}, 
x_{4} - x_{14}x_{16}, 
x_{3} - 1, 
x_{2} - 1, 
x_{1} - 1
    \rangle
\end{autobreak}

\end{align*}
\caption{The ideal for half-braidings for the object $(23) \oplus (12) \oplus (13)$ in $\QQ$-$\mathsf{Vec}_{S_3}$. The second expression is a Gröbner basis with respect to the lexicographic ordering.} \label{fig:half_braidings_ideal}
\end{figure} 
\end{example}

\newpage

\section{An algorithm based on the induction functor} \label{sec:algorithm}

We will now present our algorithm for computing the center. We note that the idea is rather simple in theory. The difficulties lie in making it constructive and effective---a lot of work was spent on the actual software implementation.

\begin{tcolorbox}
\begin{assumption}
Throughout, we assume that~$\mathcal{C}$ is a pivotal fusion category over a field $\Bbbk$ with $\dim(\mathcal{C}) \neq 0$. We denote by $X_1,\ldots,X_n$ a complete set of the isomorphism classes of simple objects of~$\mathcal{C}$. We denote by $\psi \colon (-) \to (-)^{\ast \ast}$ the pivotal structure on $\mathcal{C}$. For an object $X$ we denote by $\ev_X \colon X^\ast \otimes X \to \mathbb 1$ and $\coev_X \colon \mathbb 1 \to X \otimes X^*$ the evaluation and coevaluation morphisms, respectively.
\end{assumption}
\end{tcolorbox}

We note that by \cite[Theorem 4.8.4]{EGNO} we have $\dim(X_i) \neq 0$. In the proof of this fact in \cite{EGNO} the base field is assumed to be algebraically closed, but the proof remains valid for a split simple object, which is the case in our definition of a fusion category.

\subsection{The induction functor and the basic idea} 
Recall the forget functor $F \colon \mathcal{Z}(\mathcal{C}) \to \mathcal{C}$ from Equation \ref{eq:forget}. It admits a two-sided adjoint 
\begin{equation} \label{eq:I_obj}
    I\colon \mathcal C \to \mathcal Z(\mathcal C) \;,
\end{equation}
called the \emph{induction functor} \cite[Section 9.2]{EGNO}. Explicitly, for $V \in \mathcal{C}$ the underlying object of $I(V)$ is given by
\begin{equation}
    FI(V) = \bigoplus \limits_{i=1}^n (X_i \otimes V) \otimes X_i^\ast
\end{equation}
and the half-braiding for this object is given by the component morphisms 
\begin{equation} \label{eq:I_half_braiding}
    \gamma(Z)_{ij} = \sum\limits_{f \in B, g \in B'}
    \begin{tikzcd}[column sep = huge]
        ((X_i \otimes V) \otimes X_i^\ast) \otimes W \ar[r, "a_{X_i \otimes V, X_i^\ast, W}"] & (X_i \otimes V) \otimes (X_i^\ast \otimes W) \ar[d, "(f \otimes \id_V) \otimes g"] \\
        & ((W \otimes X_j) \otimes V) \otimes X_j^\ast  \ar[d, "a_{W,X_j,V} \otimes \id_{X_j^\ast}"] \\
        W \otimes ((X_j \otimes V) \otimes X_j^\ast) & (W \otimes (X_j \otimes V)) \otimes X_j^\ast \ar[l, "a_{W, X_j \otimes V, X_j^\ast}"] 
    \end{tikzcd} \;,
\end{equation}
where $B$ and $B'$ are a pair of dual bases of $\Hom_{\mathcal{C}}(X_i, W\otimes X_j)$ and $\Hom_{\mathcal{C}}(W\otimes X_j, X_i) \cong \Hom_{\mathcal{C}}(X_i^\ast\otimes W, X_j^\ast)$ where the isomorphism is given by the natural isomorphism of the dual adjunction. These details can be found \cite[Section 2]{bruguieres2013center}. We have added the associators here. \\

We want to give the explicit adjunction isomorphisms between the forget functor and the induction functor. To this end, let us assume here that $X_1 = \mathbb 1$. The left adjunction isomorphisms given by
\begin{equation}
    \begin{aligned}\label{adjunction_isomorphism}
    \Hom_{\mathcal C}(V,Y) &\cong \Hom_{\mathcal Z(\mathcal C)}(I(V), (Y,\gamma)) \\    
    f &\mapsto \sum\limits_{i} \dim_R X_i \cdot \phi_i(f) \circ p_i
    \end{aligned}
\end{equation}
where $p_i\colon \bigoplus_{i} (X_i \otimes V) \otimes X_i^\ast \to (X_i \otimes V) \otimes X_i^\ast$ are the projections and $\phi_i(f)$ is given by
\begin{equation}
   \begin{tikzcd}[column sep = huge]
        (X_i \otimes V) \otimes X_i^\ast \ar[d, "{:= \phi_i(f)}"] \ar[r, "\id_{X_i} \otimes f \otimes \id_{X_i^\ast}"] & (X_i \otimes Y) \otimes X_i^\ast \ar[r, "\gamma(X_i)^{-1} \otimes \id_{X_i^\ast}"] & (Y \otimes X_i) \otimes X_i^\ast \ar[d, "a_{Y,X_i,X_i^\ast}"] \\
        Y & Y \otimes (X_i^{\ast\ast} \otimes X_i^\ast) \ar[l, "\id_Y \otimes \ev_{X_i^\ast}"]   &  Y \otimes (X_i \otimes X_i^\ast) \ar[l, "\id_{Y} \otimes \psi_{X_i} \otimes \id_{X_i^\ast}"]
    \end{tikzcd}
\end{equation}
and an inverse $\Hom_{\mathcal Z(\mathcal C)}(I(V), (Y,\gamma)) \to \Hom_{\mathcal C}(V,Y)$ is given by $g \mapsto p_1 \circ g$. The proof is analogous to \cite[Theorem 2.3]{kirillov2010turaev} where a different pairing is used. The right adjunction isomorphisms are given by 
\begin{equation}
    \begin{aligned}
        \label{right_adjunction_isomorphism}
        \Hom_{\mathcal C}(Y, V) &\cong \Hom_{\mathcal Z(\mathcal C)}((Y,\gamma), I(V)) \\
        f &\mapsto \sum\limits_i \iota_i \circ  \sigma_i(f)
    \end{aligned}
\end{equation}
where $\iota_i \colon X_i \otimes V \otimes X_i^\ast \to FI(V)$ are the inclusions and $\sigma_i(f)$ is defined by
\begin{equation}
    \begin{tikzcd}
        Y \ar[r, "\id_Y \otimes \coev_{X_i}"] \ar[d, ":= \sigma_i(f)"] & Y \otimes (X_i \otimes X_i^\ast) \ar[r, "a_{Y,X_i,X_i^\ast}^{-1}"] & (Y\otimes X_i) \otimes X_i^\ast \ar[d, "\gamma(X_i)\otimes \id_{X_i^\ast}"] \\
        (X_i \otimes V) \otimes X_i^\ast & & (X_i \otimes Y) \otimes X_i^\ast \ar[ll, "\id_{X_i} \otimes f \otimes \id_{X_i^\ast}"]
    \end{tikzcd}
\end{equation}
and an inverse is given by $\Hom_{\mathcal C}(Y, V) \to \Hom_{\mathcal Z(\mathcal C)}((Y,\gamma), I(V)) \colon g \mapsto \iota_1 \circ g$. Again the proof follows the exact same arguments as the left adjoint case. \\

At the heart of our algorithm lies the following well-known property of the induction functor which is an immediate consequence of the adjunction. 

\begin{lemma}
Every simple object $(Z, \gamma_Z) \in \mathcal Z(\mathcal C)$ arises as a subobject of $I(X_i)$ for some simple object $X_i \in \mathcal{C}$. 
\end{lemma}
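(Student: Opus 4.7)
The plan is to invoke the right adjunction~\eqref{right_adjunction_isomorphism} between the forgetful functor~$F$ and the induction functor~$I$. Since $(Z,\gamma_Z)$ is simple in $\mathcal Z(\mathcal C)$, its underlying object $Z=F(Z,\gamma_Z)$ is nonzero. Semisimplicity of $\mathcal C$ then yields a decomposition of $Z$ into a direct sum of the $X_j$, from which I pick any simple summand $X_i$ appearing with positive multiplicity; the corresponding projection furnishes a nonzero morphism $f\colon Z\to X_i$ in $\mathcal C$.

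Next I would transport $f$ through the right adjunction to obtain a morphism $\widetilde f\in\Hom_{\mathcal Z(\mathcal C)}((Z,\gamma_Z), I(X_i))$. This $\widetilde f$ is nonzero because the adjunction is an isomorphism of $\Bbbk$-vector spaces; concretely, composing $\widetilde f$ with the canonical projection $FI(X_i)=\bigoplus_j (X_j\otimes X_i)\otimes X_j^\ast \twoheadrightarrow X_i$ onto the $X_1=\mathbb 1$ summand recovers $f$, in line with the inverse adjunction described after~\eqref{right_adjunction_isomorphism}.

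Finally, since $(Z,\gamma_Z)$ is simple, its only subobjects are $0$ and $(Z,\gamma_Z)$ itself, so $\ker(\widetilde f)$ is either $0$ or all of $(Z,\gamma_Z)$; the latter is excluded because $\widetilde f\neq 0$. Hence $\widetilde f$ is a monomorphism, exhibiting $(Z,\gamma_Z)$ as a subobject of $I(X_i)$. I do not anticipate a substantive obstacle here; the only minor subtlety is picking the right adjunction rather than the left one, since the left adjunction would instead present $(Z,\gamma_Z)$ as a quotient of some $I(X_i)$, from which the same conclusion follows by semisimplicity of $\mathcal Z(\mathcal C)$ but by a less direct route.
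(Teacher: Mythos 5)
Your proof is correct and follows essentially the same route as the paper's: both produce a nonzero morphism $Z\to X_i$ to a simple object (the paper phrases it as a simple quotient, you as a simple summand, which is the same in the semisimple setting), transport it via the right adjunction to a nonzero morphism $(Z,\gamma_Z)\to I(X_i)$ in the center, and conclude by simplicity that this is a monomorphism.
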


\begin{proof}
Since $F( (Z,\gamma_Z) )$ is a non-zero object of $\mathcal{C}$, it has a simple quotient $X_j$ and so there is a non-zero morphism $F((Z,\gamma_Z) ) \to X_j$. Hence, using the adjunction, we conclude
\begin{equation}
0 \neq \Hom_{\mathcal C}(F((Z,\gamma_Z)), X_j) \simeq \Hom_{\mathcal Z(\mathcal C)}((Z,\gamma_Z), I(X_j)).
\end{equation}
We conclude there is a non-zero morphism $f \colon (Z,\gamma_Z) \to I(X_j)$. Since $(Z,\gamma_Z)$ is simple, the kernel of $f$ must be zero, so $f$ is a monomorphism and therefore $(Z,\gamma_Z)$ is a subobject of $I(X_j)$.
\end{proof}

In other words, the induction functor $I$ is \emph{surjective} as in \cite[Definition 1.8.3]{EGNO}, see also \cite[Corollary 7.13.11]{EGNO}. The lemma yields an obvious idea how to compute a complete set $\mathrm{Irr}(\mathcal{Z}(\mathcal{C}))$ of the isomorphism classes of simple objects of $\mathcal{Z}(\mathcal{C})$. We formalize this in Algorithm~\ref{alg:center_first_version}.

\begin{algorithm}[htbp]
    \DontPrintSemicolon
    \caption{Compute $\mathrm{Irr}(\mathcal{Z}(\mathcal{C}))$ -- First version \label{alg:center_first_version}}
    Initialize $\mathrm{Irr}(\mathcal{Z}(\mathcal{C})) = \emptyset$ \;
    \For{$i=1$ \KwTo $n$}{
        Compute the object $I(X_i)$ \;
        Compute a complete set $\mathrm{Irr}(I(X_i))$ of isomorphism classes of simple subobjects of $I(X_i)$ \;
        \For{$(Z,\gamma) \in \mathrm{Irr}(I(X_i))$}{
            \If{$(Z,\gamma)$ \textup{is not isomorphic to any object in} $\mathrm{Irr}(\mathcal{Z}(\mathcal{C}))$}
            {
                Add $(Z,\gamma)$ to $\mathrm{Irr}(\mathcal{Z}(\mathcal{C}))$ \;
            } 
        }
    }
    \Return $\mathrm{Irr}(\mathcal{Z}(C))$
\end{algorithm}

Using the explicit formulas in Equations \eqref{eq:I_obj} and \eqref{eq:I_half_braiding}, we were able to implement the induction functor in our software framework, so Step 3 in Algorithm \ref{alg:center_first_version} is constructive. The problems are Step 4 (the computation of simple subobjects) and Step 6 (the isomorphism check). Decomposing a semisimple object $X \in \mathcal C$ in an abelian category is equivalent to decomposing the endomorphism algebra $\End_{\mathcal{C}}(X)$, i.e. if 
$$X = \bigoplus\limits_{i=1}^k n_i\cdot X_i$$ for simple objects $X_i$ then 
$$\End_{\mathcal{C}}(X) = \bigoplus\limits_{i=1}^k \mathrm{Mat}_{n_i\times n_i} \End_{\mathcal{C}}(X_i)$$
and $X_i \cong \mathrm{Im}(f_i)$ for any $0 \neq f_i \in \End_{\mathcal{C}}(X_i)$ considered as an element of $\End_{\mathcal{C}}(X_i)$. The decomposition of algebras is in general a very hard task and there is no algorithm known. In our implementation we use the MeatAxe algorithm \cite{Parker1984, Holt1994, Lux2010}, enhanced by various further technical tweaks. Details can be found in the source code of our implementation. In practice, for small algebras, this works reasonably well. Even for complicated fusion categories like the Haagerup subfactor or $PSU(2)_k$ the algebras in consideration are small (rank < 20). 

In order to decompose endomorphism algebras at all, we need to be able to determine these algebras explicitly. More generally, we need to be able to compute Hom-spaces in the center. We will discuss this in Section \ref{Morphisms}. First, we will discuss a simple optimization of Algorithm \ref{alg:center_first_version}.

\subsection{Center-generating simple objects}

In most examples the computation of $I(X_i)$ for all simple $X_i$ will result in redundant computations: 1) it might happen that $I(X_i) \cong I(X_j)$ for $X_i \not \cong X_j$; 2) there are examples where all simple objects are contained in just a few inductions. This leads us to the following concept.

\begin{definition}
    A \emph{set of center-generating simples} for $\mathcal Z(\mathcal C)$ is a collection of simple objects $S_1,\ldots,S_l \in \mathcal C$ such that every simple object $Z\in \mathcal Z(\mathcal C)$ is a subquotient of at least one $I(S_i)$.
\end{definition}

Now, the goal is to (algorithmically) find a small set of center-generating simples. Here is one approach. Recall that an object $X \in \mathcal C$ is called \emph{invertible} if $X \otimes X^\ast \cong \mathbb 1$. This immediately implies that $X$ is simple. We will write $\mathrm{Inv}(\mathcal C) \subseteq \mathrm{Irr}(\mathcal C)$ for the invertible objects in $\mathcal C$.

\begin{lemma}\label{isomorphic_inductions}
    Let $V,W \in \mathcal C$ such that $W \cong J \otimes V \otimes J^\ast$ for some $J \in \mathrm{Inv}(\mathcal C)$. Then $I(V) \cong I(W)$.
\end{lemma}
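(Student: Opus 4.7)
The plan is to apply Yoneda's lemma together with the adjunction $I \dashv F$ recorded in \eqref{adjunction_isomorphism}. By Yoneda it suffices to produce a natural isomorphism
\[
\Hom_{\mathcal Z(\mathcal C)}(I(V), (Z,\gamma_Z)) \;\cong\; \Hom_{\mathcal Z(\mathcal C)}(I(W), (Z,\gamma_Z))
\]
of functors in $(Z,\gamma_Z) \in \mathcal Z(\mathcal C)$. Via the adjunction, both sides rewrite as $\Hom_{\mathcal C}(V,Z)$ and $\Hom_{\mathcal C}(J \otimes V \otimes J^*, Z)$ respectively, so I am reduced to constructing a bijection
\[
\Phi_{(Z,\gamma_Z)}\colon \Hom_{\mathcal C}(V, Z) \longrightarrow \Hom_{\mathcal C}(J \otimes V \otimes J^*, Z)
\]
which is natural in $(Z,\gamma_Z)$ as an object of the \emph{center} --- ordinary naturality in $Z \in \mathcal C$ would not suffice, since the half-braiding must enter the construction somewhere.

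The map I would use sends $f \colon V \to Z$ to the composite
\[
J \otimes V \otimes J^* \xrightarrow{\id_J \otimes f \otimes \id_{J^*}} J \otimes Z \otimes J^* \xrightarrow{\gamma_Z(J)^{-1} \otimes \id_{J^*}} Z \otimes J \otimes J^* \xrightarrow{\sim} Z,
\]
where the final arrow is the canonical isomorphism furnished by $J \otimes J^* \cong \mathbb 1$ (and associators are suppressed). This is a bijection, with inverse built symmetrically using $\gamma_Z(J)$, the coevaluation of $J$, and the natural isomorphism $J^* \otimes J \cong \mathbb 1$; both formulas make sense precisely because $J$ is invertible and $\gamma_Z(J)$ is an isomorphism.

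The step where the argument really does its work is naturality of $\Phi$ in $(Z,\gamma_Z)$. For any morphism $\phi\colon (Z,\gamma_Z) \to (Z',\gamma_{Z'})$ in $\mathcal Z(\mathcal C)$, the required identity $\phi \circ \Phi_{(Z,\gamma_Z)}(f) = \Phi_{(Z',\gamma_{Z'})}(\phi \circ f)$ will follow from commuting $\phi$ past $\gamma_Z(J)^{-1}$, which is exactly the center-morphism diagram \eqref{CenterMorphism} applied with test object $J$. Once $\Phi$ is established as a natural isomorphism, Yoneda's lemma delivers the desired isomorphism $I(V) \cong I(W)$ in $\mathcal Z(\mathcal C)$. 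I expect the only remaining friction to be bookkeeping of associators and of the identification $J \otimes J^* \cong \mathbb 1$ in the explicit composites; no deeper ingredient than invertibility of $J$ and the defining property of a morphism in the center is needed.
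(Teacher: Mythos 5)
Your argument is correct, and it takes a genuinely different route from the paper. The paper's proof is direct and explicit: using the formula \eqref{eq:I_half_braiding} for the half-braiding on $I(V)$, it observes that $- \otimes J$ permutes the simple objects, so the underlying object of $I(J \otimes V \otimes J^*)$, namely $\bigoplus_i (X_i \otimes J) \otimes V \otimes (X_i \otimes J)^*$, is the same as that of $I(V)$ up to re-indexing; and it checks that the pairs $(f \otimes \id_J, \id_{J^*} \otimes g)$ are again dual bases, so the half-braiding components match summand by summand. You instead invoke Yoneda and the adjunction $I \dashv F$, reducing the claim to a natural isomorphism of the represented functors $\Hom_{\mathcal{Z}(\mathcal{C})}(I(V), -)$ and $\Hom_{\mathcal{Z}(\mathcal{C})}(I(W), -)$, which you build from the auto-equivalence $J \otimes (-) \otimes J^*$ together with the isomorphism $\gamma_Z(J)$. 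Your observation that ordinary naturality in $Z \in \mathcal{C}$ is not enough---and that the required naturality in the \emph{center} is exactly the content of the center-morphism square \eqref{CenterMorphism} applied with test object $J$---is the right key point, and it checks out. The paper's argument has the advantage of producing an explicit isomorphism (a permutation of summands, which matters for a software implementation), whereas yours is cleaner to verify since it sidesteps the dual-basis bookkeeping and relies only on the adjunction, functoriality, and the defining condition of a central morphism. Both proofs use the explicit description of $I$ in some form (yours through the adjunction units; the paper's through the half-braiding formula), so neither is strictly more elementary, but yours is more conceptual and the paper's more constructive.
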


\begin{proof}
    We can assume that $\mathcal{C}$ is strict. Since $J$ is invertible, the object $X_i \otimes J$ is simple and $- \otimes J$ is an auto-equivalence of $\mathcal{C}$. The half-braidings on the underlying object of $I(W) \cong I(J \otimes V \otimes J^\ast)$ are given by the components
    $$\gamma(Z)_{ij} = \sum\limits_{f \in B, g \in B'} f \otimes \id_J \otimes \id_V \otimes \id_{J^\ast} \otimes g.$$
    The $f\otimes\id_J$ and $\id_{J^\ast} \otimes g$ form again a pair of dual basis. Hence, $I(V)$ and $I(J \otimes V \otimes J^\ast)$ carry the same half-braiding up to reordering of the summands. 
\end{proof}

The action of $\mathrm{Inv}(\mathcal C)$ defines an equivalence relation on $\mathrm{Irr}(\mathcal C)$. So, by Lemma \ref{isomorphic_inductions} the representatives for the equivalence classes form a center-generating set. This set does not need to be minimal but it is one that we can algorithmically compute. \\

Using the concept of center-generating simples we give in Algorithm \ref{alg1} a refined version of Algorithm \ref{alg:center_first_version}. We restricted the computations to a set of center-generating simples and replaced the condition for whether a simple object is accepted as new. The latter works since for every simple object $X_i \in \mathrm{Irr}(\mathcal C)$ and simple object $Z \in \mathrm{Irr}(\mathcal Z(\mathcal C))$ we have $$\Hom_{\mathcal C}(X_i, Z) \cong \Hom_{\mathcal Z(\mathcal C)}(I(X_i), Z)$$ and therefore whenever $\Hom_{\mathcal C}(X_i, Z) \neq 0$ for any $X_i$ processed before, then $Z$ is already in the list of simple objects of $\mathcal Z(\mathcal C)$.

\begin{algorithm}[ht]
    \caption{Compute $\mathrm{Irr}(\mathcal{Z}(\mathcal{C}))$ -- Refined version \label{alg1}}
    \DontPrintSemicolon
    \KwInput{A fusion category $\mathcal C$}
    \KwOutput{A complete set $\mathrm{Irr}(\mathcal{Z}(\mathcal C))$ of isomorphism classes of simple objects of $\mathcal{Z}(\mathcal C)$}
    Initialize $\mathrm{Irr}(\mathcal{Z}(\mathcal{C})) = \emptyset$ \;

    Compute a set $S = \{S_1,...,S_l\} \subseteq \mathrm{Irr}(\mathcal C)$ of center-generating simples, e.g., via computing the conjugacy classes on $\mathrm{Irr}(\mathcal C)$\;

    \For {$i \in \{1,...,l\}$} {
        Compute $I(S_i)$\;
        Compute all non-isomorphic simple subobjects $Z_1,...,Z_k$ of $I(S_i)$\;
        \For {$j \in \{1,...,k\}$} {
            \If {$\Hom_{\mathcal{C}}(Z_j, S_t) = 0$ \textup{for all} $t \in \{1,...,i-1\}$}{
                Add $Z_j$ to $\mathrm{Irr}(\mathcal{Z}(\mathcal C))$
            }
        }
    }
    \Return $\mathrm{Irr}(\mathcal{Z}(C))$
\end{algorithm}

\subsection{Computing Hom-spaces in $\mathcal Z(\mathcal C)$} \label{Morphisms}

There are multiple ways to obtain morphisms in the center of a fusion category $\mathcal C$. The first and most straight forward approach is to solve for the condition \eqref{CenterMorphism}. If $f \in \Hom_{\mathcal Z(\mathcal C)}(X,Y) \subset \Hom_{\mathcal C}(F(X),F(Y))$  we can write \[f = a_1g_1 + \cdots + a_k g_k\] where $g_1,...,g_k$ is a basis of $\Hom_{\mathcal C}(F(X),F(Y))$. Condition \eqref{CenterMorphism} yields now a set of equations \[\gamma_Y(Z) \circ \left( f \otimes \id_Z \right) = \left( \id_Z \otimes f \right) \circ \gamma_X(Z)\]
for each simple object $Z \in \mathcal C$ which are linear in the $a_i$. By solving this system we obtain a basis of the space $\Hom_{\mathcal Z(\mathcal C)}(X,Y)$. This approach scales badly since $k$ grows quickly with the number of direct summands in $F(X)$ and $F(Y)$, and the setup of the system is expensive. Though for ``small'' objects it can outperform the following approaches. Especially if the objects are simple, i.e. if we want to decide isomorphy between simples, this is the way to go.\\

In general a much better approach is to take advantage of the adjunction isomorphisms \ref{adjunction_isomorphism} and \ref{right_adjunction_isomorphism}. With those we immediately obtain endomorphism spaces of all objects of the form $I(X)$ since 
\begin{equation} \label{eq:ind_hom}
    \Hom_{\mathcal Z(\mathcal C)}(I(X),I(X)) \cong \Hom_{\mathcal C}(X, FI(X))
\end{equation}
and with some effort also arbitrary Hom-spaces as follows. Let $X,Y \in \mathcal Z(\mathcal C)$ and denote by $Z_1,...,Z_r$ the simple objects of $\mathcal Z(\mathcal C)$. Since $\mathcal Z (\mathcal C)$ is semisimple we can express any morphism $f\colon X \to Y$ as a sum of morphisms $X \to Z_i \to Y$. If $\lbrace S_1,\ldots,S_l \rbrace$ is a set of center-generating simples, then for all $Z_i$ there is some $j$ such that there is a non-zero morphism $Z_i \hookrightarrow I(S_j)$. Then $\Hom_{\mathcal{Z}(\mathcal{C})}(X,Y)$ is spanned by morphisms of the form $X \to I(S_j) \to Y$. We summarize this approach in Algorithm \ref{alg:hom_spaces}. 

\begin{algorithm}[!ht]
    \DontPrintSemicolon
    \caption{Computing Hom-spaces in $\mathcal Z(\mathcal C)$ \label{alg:hom_spaces}}
    \KwInput{Objects $X,Y \in \mathcal Z(\mathcal C)$. A set $\{S_1,...,S_l\}$ of center-generating simples}
    \KwOutput{A basis of $\Hom_{\mathcal{Z}(\mathcal{C})}((X, \gamma_X),~(Y,\gamma_Y))$}
    Compute a basis of the spaces $\Hom_{\mathcal{Z}(\mathcal{C})}(I(S_j),Y)$ and $\Hom_{\mathcal{Z}(\mathcal{C})}(X,I(S_j))$ for all $j = 1,...,l$ via the adjunction \eqref{eq:ind_hom} \;

    Collect the set of maps $M_j = \{p \circ i \in \Hom_{\mathcal{Z}(\mathcal{C})}(X,Y) \mid p \in \Hom_{\mathcal{Z}(\mathcal{C})}(I(S_j),Y), i \in \Hom_{\mathcal{Z}(\mathcal{C})}(X, I(S_j))\}$ for all $j = 1,...,l$\;

    Use linear algebra to find a basis in the union of the $M_j$
\end{algorithm}

Still another way is to project onto $\Hom_{\mathcal Z(\mathcal C)}(X,Y) \subseteq \Hom_{\mathcal C}(F(X),F(Y))$ with the formula in the following lemma. In practice, however, a combination of linear systems of equations and algorithm \ref{alg:hom_spaces} is the most efficient way to get to morphisms.

\begin{lemma}[{\cite[Lemma 2.2.]{kirillov2010turaev}}]\label{HomProjection}
    For $(X,\gamma_X), (Y,\gamma_Y) \in \mathcal Z(\mathcal C)$ the map $E_{X,Y}: \Hom_{\mathcal C}(X,Y) \to \Hom_{\mathcal C}(X,Y)$ given by 
	\[E_{X,Y}(t) = \frac{1}{\dim \mathcal C} \sum\limits_{i =1 }^n \dim X_i \phi_i(t) \;, \]
	where $\phi_i(t)$ is given by 
	\[
	\begin{tikzcd}[column sep = huge]
		X \ar[r, "\id_X\otimes \coev(X_i)"] \ar[dddd, "\phi_i(t)"]& X \otimes (X_i \otimes X_i^\ast) \ar[r, "a_{X,X_i,X_i^\ast}^{-1}"] & (X \otimes X_i) \otimes X_i^\ast \ar[d, "\gamma_X(X_i)\otimes \id_{X_i^\ast}"]\\
        & & (X_i\otimes X)\otimes X_i^\ast \ar[d, "\id_{X_i}\otimes t\otimes \id_{X_i^\ast}"]   \\
		 & & (X_i\otimes Y)\otimes X_i^\ast \ar[d, "a_{X_i,Y,X_i^\ast}"] \\
         & & X_i\otimes (Y\otimes X_i^\ast) \ar[d, "\psi_{X_i}\otimes \gamma_Y(X_i^\ast)"] \\
		 Y & (X_i^{\ast\ast} \otimes X_i^\ast) \otimes Y \ar[l, "\ev_{X_i^\ast}\otimes \id{Y}"] & X_i^{\ast\ast} \otimes (X_i^\ast \otimes Y) \ar[l, "a_{X_i^{\ast\ast},X_i^\ast,Y}^{-1}"] 
	\end{tikzcd}\] 
	is a projection from $\Hom_{\mathcal C}(X,Y)$ onto $\Hom_{\mathcal Z(\mathcal C)}((X,\gamma_X), (Y,\gamma_Y))$.
\end{lemma}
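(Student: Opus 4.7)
The plan is to prove this in two parts: first, that $E_{X,Y}(t)$ always satisfies the center-morphism condition \eqref{CenterMorphism}, and second, that $E_{X,Y}$ acts as the identity on morphisms that already belong to $\Hom_{\mathcal Z(\mathcal C)}((X,\gamma_X),(Y,\gamma_Y))$. Well-definedness of the prefactor $\frac{1}{\dim \mathcal C}$ is immediate from our standing assumption $\dim(\mathcal C) \neq 0$. I would view the sum $\sum_i \dim(X_i)\,\phi_i(t)$ as a categorical averaging against the coend $C = \bigoplus_i X_i \otimes X_i^\ast$ of $\mathcal C$ (equation \eqref{coend_for_fusion}); this framing keeps the bookkeeping manageable once associators are inserted.

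For the projection property, assume first that $t$ already lies in $\Hom_{\mathcal Z(\mathcal C)}((X,\gamma_X),(Y,\gamma_Y))$. Then the defining identity $(\id_{X_i} \otimes t) \circ \gamma_X(X_i) = \gamma_Y(X_i) \circ (t \otimes \id_{X_i})$ lets me push $t$ through the half-braiding on $X$ appearing inside $\phi_i(t)$. After this rewrite, both half-braidings in the diagram are on $Y$, and the hexagon axiom for half-braidings gives
\begin{equation*}
    (\id_{X_i} \otimes \gamma_Y(X_i^\ast)) \circ a_{X_i,Y,X_i^\ast} \circ (\gamma_Y(X_i) \otimes \id_{X_i^\ast}) = \gamma_Y(X_i \otimes X_i^\ast) \circ a_{Y,X_i,X_i^\ast}^{-1} \cdot (\text{associator juggling}),
\end{equation*}
so the composite collapses into a single half-braiding $\gamma_Y(X_i \otimes X_i^\ast)$ sandwiched between $t \otimes (\coev_{X_i})$ and the pivotal trace against $X_i$. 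Using $\gamma_Y(\mathbb 1) = \id_Y$ together with naturality of $\gamma_Y$ applied to $(\psi_{X_i} \otimes \id_{X_i^\ast}) \circ \coev_{X_i} \colon \mathbb 1 \to X_i^{\ast\ast} \otimes X_i^\ast$ (a morphism in $\mathcal C$), the half-braiding disappears, and the remaining diagram is precisely the left pivotal trace of $\id_{X_i}$ composed with $t$, yielding $\phi_i(t) = \dim(X_i)\,t$. Summing and using \eqref{eq:dim_squares} gives $E_{X,Y}(t) = t$.

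For the statement that $E_{X,Y}(t)$ is always a central morphism, I need to show for every $Z \in \mathcal C$ that
\begin{equation*}
    \gamma_Y(Z) \circ (E_{X,Y}(t) \otimes \id_Z) = (\id_Z \otimes E_{X,Y}(t)) \circ \gamma_X(Z).
\end{equation*}
By linearity and semisimplicity it suffices to check this for $Z = X_k$ simple. On both sides I would insert the definition of $\phi_i(t)$, then push $\gamma_X(X_k) \otimes \id$ (resp.\ $\id \otimes \gamma_Y(X_k)$) through each subdiagram using the hexagon, naturality, and the fact that $\gamma_X$ and $\gamma_Y$ are natural with respect to all morphisms of $\mathcal C$, including $\ev$, $\coev$, $\psi$, and basis morphisms for the fusion coefficients. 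The crucial combinatorial input is the re-indexing
\begin{equation*}
    \bigoplus_i \dim(X_i)\, (X_i \otimes X_k) \otimes X_i^\ast \;\cong\; \bigoplus_j \dim(X_j)\, (X_k \otimes X_j) \otimes X_j^\ast,
\end{equation*}
together with matching dual bases of $\Hom_{\mathcal C}(X_i, X_k \otimes X_j)$ and $\Hom_{\mathcal C}(X_j, X_i \otimes X_k)$ related by pivotal duality. The weights $\dim(X_i)$ are exactly what is required to match traces on both sides; this is essentially the ``$S$-invariance of the coend'' familiar from \cite{bruguieres2013center,subfactors}.

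The main obstacle will be this second part: the diagrammatic manipulation is delicate in the non-strict setting because every rewrite requires inserting or cancelling associators (the strict-case argument in \cite{kirillov2010turaev} hides this entirely), and the pivotal structure $\psi$ must be tracked carefully so that the re-indexing identity receives the correct $\dim(X_i)$ weights. I would handle this by working in a strictification only to identify the underlying combinatorics, and then re-expressing the final identity with explicit associators — this is exactly the kind of bookkeeping our software needs to perform anyway, so it can in fact be cross-checked computationally on small examples.
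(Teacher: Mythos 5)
The paper does not prove this lemma itself; it cites \cite[Lemma 2.2]{kirillov2010turaev} and uses it only as an auxiliary device for computing Hom-spaces, so there is no in-paper proof to compare your attempt against. Judging your proposal on its own merits: the two-part structure you set up --- show the image of $E_{X,Y}$ lands in $\Hom_{\mathcal Z(\mathcal C)}$, then show $E_{X,Y}$ restricts to the identity there --- is the correct skeleton, and your argument for the second part is essentially complete. Pushing the central $t$ through $\gamma_X(X_i)$, collapsing $\gamma_Y(X_i)$ and $\gamma_Y(X_i^\ast)$ via the hexagon, applying naturality of $\gamma_Y$ at $(\psi_{X_i}\otimes\id_{X_i^\ast})\circ\coev_{X_i}\colon\mathbb 1\to X_i^{\ast\ast}\otimes X_i^\ast$, and using $\gamma_Y(\mathbb 1)=\id_Y$ does yield $\phi_i(t)=\dim(X_i)\,t$, whence $E_{X,Y}(t)=t$ by Equation \eqref{eq:dim_squares}.

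The gap is in the first part, and you flag it yourself: you never actually carry out the verification that $E_{X,Y}(t)$ satisfies condition \eqref{CenterMorphism} for arbitrary $t$. Reducing to $Z=X_k$ simple is fine, but the ``crucial combinatorial input'' you name --- re-indexing $\bigoplus_i \dim(X_i)\,(X_i\otimes X_k)\otimes X_i^\ast \cong \bigoplus_j \dim(X_j)\,(X_k\otimes X_j)\otimes X_j^\ast$ via pivotally dual bases of $\Hom_{\mathcal C}(X_i, X_k\otimes X_j)$ and $\Hom_{\mathcal C}(X_j, X_i\otimes X_k)$ --- is precisely where all the work lives, and invoking ``$S$-invariance of the coend'' plus a strictification-and-cross-check-on-the-computer plan is not a proof. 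In particular, the appearance of the weights $\dim(X_i)$ requires the precise relation between dual bases under the pivotal pairing (so that the averaged sum acts as the projector onto the isotypic components), and that relation needs to be established, not asserted. Two ways to close the gap: make the dual-basis re-indexing identity a lemma and push all associators through it explicitly; or, more cleanly, factor $E_{X,Y}(t)$ through the coend object $C=\bigoplus_i X_i\otimes X_i^\ast$ of \eqref{coend_for_fusion} (equivalently $FI(\mathbb 1)$) --- since $C$ carries a canonical half-braiding compatible with its dinatural structure, centrality of $E_{X,Y}(t)$ then follows from naturality alone, sidestepping the dual-basis bookkeeping entirely.
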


\section{Splitting of simple central objects} \label{sec:splitting}

In this section we want to take a closer look at some properties of the center when the ground field $\kk$ is not algebraically closed. 

\subsection{Scalar extension}

For the following concepts we also refer to \cite{Etingof2012} and \cite{morrison2012non}.
%
Let $\mathcal C$ be a $\mathbb k$-linear semisimple abelian category and $\kk \hookrightarrow \mathbb{K}$ be a field extension. We define the category $\mathcal C \otimes K$ to have the same objects as $\mathcal C$ and morphism spaces are given by 
    \begin{equation}
        \Hom_{\mathcal C \boxtimes \mathbb K}(X,Y) = \Hom_{\mathcal C}(X,Y) \otimes_{\kk} \mathbb K\;.
    \end{equation}

The category $\mathcal C \otimes \mathbb K$ is again $\mathbb k$-linear and also $\mathbb K$-linear. However, it may not be abelian if $\mathcal{C}$ does not split since the scalar extension of an endomorphism algebra might gain new idempotents which are not split (i.e. the object they would project to does not exist). Consider the example $\mathcal C = \mathrm{Rep}_{\mathbb Q}(C_3)$ of rational representations of the cyclic group of order $3$. There is a 2-dimensional representation with endomorphism ring isomorphic to $\mathbb K = \QQ(\xi_3)$, where $\xi_3$ is a third root of unity. Tensoring with $\QQ(\xi_3)$ splits the algebra and hence produces projectors onto the completely irreducible subrepresentations. Formally, these objects are not yet contained in the category $\mathcal C\otimes \mathbb K$. 

Hence, we want to consider the \emph{Karoubian closure} $\mathrm{Kar}(\mathcal C \otimes \mathbb K)$ of $\mathcal C \otimes \mathbb K$, which formally adds the corresponding objects for all idempotents. We refer to \cite[Chapter 11]{EMTW} for basics on this construction. The category $\mathrm{Kar}( \mathcal C \otimes \mathbb K)$ is a semisimple $\mathbb K$-linear abelian category. We call it the \emph{scalar extension} of $\mathcal{C}$ to $\mathbb K$ and denote it with $\mathcal C \boxtimes \mathbb K := \mathrm{Kar(\mathcal C \otimes K)}$. If $\mathcal{C}$ has a monoidal structure, then so does $\mathcal C \boxtimes \mathbb K $. If $\mathcal{C}$ is a weak fusion category, then so is $\mathcal C \boxtimes \mathbb K $. We say $\mathbb K$ is a \emph{splitting field} for $\mathcal C$ if $\mathcal C \boxtimes \mathbb K $ is split.

Notice that, by construction, every simple object of $\mathcal C \boxtimes \mathbb K$ is a direct summand of a scalar extension of a simple object $X \in \mathcal{C}$. Moreover, if $X$ and $Y$ are non-isomorphic simple objects, then no direct summand of $X \otimes \mathbb K$ is isomorphic to a direct summand of $Y \otimes \mathbb K$ since otherwise there would be a non-zero morphism $X \to Y \in \mathcal C \boxtimes \mathbb K$, given by projection and inclusion, but this is not possible since 
\[
\Hom_{C \boxtimes \mathbb K }(X, Y) =\Hom_{\mathcal{C}}(X,Y) \otimes \mathbb K = 0 \;. 
\]

\begin{remark}
    For a weak fusion category $\mathcal C$ over $\mathbb k$ and a field extension $\mathbb k \hookrightarrow \mathbb K$ the construction above is equivalent to the construction of the $\mathbb k$-linear Deligne tensor product $\mathcal C \boxtimes \mathrm{Vect}_{\mathbb K}$ \cite{lopezcocomplete}. Hence, the notation $\mathcal C \boxtimes K = \mathcal C \boxtimes \mathrm{Vect}_{\mathbb K}$ is justified.
\end{remark}

\subsection{Computing the center over a splitting field}

Combining the results of the previous sections we give the final algorithm to compute the split simple objects of the center of fusion category, if necessary over a field extension.

\begin{algorithm}[htbp]
    \DontPrintSemicolon
    \caption{Compute the split simples $\mathrm{Irr}(\mathcal{Z}(\mathcal{C}) \boxtimes \mathbb K)$ \label{alg:center_final_version}}
    \KwInput{A fusion category $\mathcal C$}
    \KwOutput{A list of representatives for the isomorphism classes of simple objects in $\mathcal Z(\mathcal C) \boxtimes \mathbb K$ for a splitting field $\mathbb K$}
    Compute the simples of $\mathcal Z(\mathcal C)$\;
    Determine a common splitting field $\mathbb K$ for all endomorphism algebras of simples in $\mathcal Z(\mathcal C)$\;
    Compute all simple subobjects of $Z$ over $\mathbb K$ for all simple $Z \in \mathcal Z(\mathcal C)$\;
    \Return A list of representatives for the isomorphism classes
\end{algorithm}
Line 2 might seem mystical at first, but is just a matter of computing the endomorphism algebras. Computing a splitting field for those is performed by OSCAR efficiently. Often, the field that splits the algebras is contained in a cyclotomic extension of the base field. For performance reasons these seem to be preferable to minimal splitting fields.

Next we will give a straightforward method to obtain the $F$-symbols of the resulting fusion category summarized in algorithm \ref{alg:F_symbols}. The $R$-symbols are analogous. 

\begin{algorithm}[htbp] 
    \DontPrintSemicolon
    \caption{Compute the $F$-symbols of a fusion category $\mathcal C$ \label{alg:F_symbols}}
    \KwInput{A fusion category $\mathcal C$ with simples $X_1,...,X_r$}
    \KwOutput{The $F$-symbols of $\mathcal C$}
    Compute the multiplicity spaces $\Hom_{\mathcal C}(X_i \otimes X_j, X_k)$ for all $i,j,k \in \{1,...,r\}$ and fix bases\;
    Build the canonical bases \eqref{eq:canonical_bases} of the spaces $\Hom_{\mathcal C}((X_i \otimes X_j) \otimes X_k, X_l)$ and $\Hom_{\mathcal C}(X_i \otimes (X_j \otimes X_k), X_l)$ for all $i,j,k,l \in \{1,...,r\}$\;
    apply the associator $a_{X_i,X_j,X_k}$ to the basis elements of $\Hom_{\mathcal C}((X_i \otimes X_j) \otimes X_k, X_l)$ and express the results in terms of the basis of $\Hom_{\mathcal C}(X_i \otimes (X_j \otimes X_k), X_l)$\;
    \Return The coefficients obtained in the previous step as $F$-symbols
\end{algorithm}

\subsection{Frobenius--Perron dimension}

From now on, let $\mathcal C$ be a weak fusion category. The Frobenius--Perron dimension of an object $X \in \mathcal{C}$ is defined to be the largest real eigenvalue of the matrix of the action of the isomorphism class of $X$ on the Grothendieck ring $\mathrm{Gr}(\mathcal{C})$ of $\mathcal{C}$ by multiplication, see \cite[Section 4.5]{EGNO}. In case $\mathbb k$ is algebraically closed, the Frobenius--Perron dimension of the category $\mathcal{C}$ itself is defined in \cite[Definition 6.1.7]{EGNO} as
\begin{equation}
    \FPdim(\mathcal C) := \sum\limits_{i = 1}^n \FPdim(X_i)^2 \;.
\end{equation}
In \cite{non-split-fusion} this definition was extended to a general base field $\mathbb k$ via
\begin{equation} 
    \FPdim(\mathcal C) := \sum\limits_{i = 1}^n \frac{\FPdim(X_i)^2}{\dim_{\End_{\mathcal{C}}(\mathbb 1)} \End_{\mathcal{C}}(X_i)} = \frac{\FPdim(X_i)^2}{\dim_{\mathbb k} \End_{\mathcal{C}}(X_i)} \;,
\end{equation}
where we use our assumption that $\mathbb 1$ is split simple. In analogy with Equation \eqref{eq:center_dimension} it is proven in \cite[Theorem 4.1.8]{non-split-fusion} that
\begin{equation} \label{eq:fpdim_center}
    \FPdim(\mathcal Z(\mathcal C)) = \FPdim(\mathcal C)^2 \;.
\end{equation}
This formula implies nice behavior of the simple central objects under extension to a splitting field.

\begin{proposition}\label{simplesdecomposition}
    Let $\mathcal{C}$ be a fusion category. Let $\overline \kk$ be an algebraic closure of $\kk$ and let $\overline{\mathcal{Z}(\mathcal{C})} = \mathcal{Z}(\mathcal{C}) \boxtimes \overline{\kk}$. If $Z \in \mathcal Z(\mathcal C)$ is a simple object, then all direct summands of $Z \otimes \overline \kk$ in  $\overline{\mathcal{Z}(\mathcal{C})}$ occur with the same multiplicity and all have the same Frobenius--Perron dimension.
\end{proposition}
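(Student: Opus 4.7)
The plan is to combine an analysis of the endomorphism division algebra of $Z$ with a Galois-theoretic argument on the scalar extension. Let $D \coloneqq \End_{\mathcal{Z}(\mathcal{C})}(Z)$, which by Schur's lemma for semisimple $\Bbbk$-linear abelian categories is a finite-dimensional division algebra over $\Bbbk$, and let $K \coloneqq Z(D)$ denote its center, a finite field extension of $\Bbbk$.

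For the multiplicity statement, I would use that the decomposition $\overline{\Bbbk} \otimes Z \cong \bigoplus_i m_i Z_i$ in $\overline{\mathcal{Z}(\mathcal{C})}$ (with pairwise non-isomorphic simples $Z_i$ satisfying $\End(Z_i) \cong \overline{\Bbbk}$) translates into an algebra isomorphism
\begin{equation*}
\overline{\Bbbk} \otimes_{\Bbbk} D \;\cong\; \End_{\overline{\mathcal{Z}(\mathcal{C})}}(\overline{\Bbbk} \otimes Z) \;\cong\; \prod_i M_{m_i}(\overline{\Bbbk}) \;.
\end{equation*}
Since the right-hand side is semisimple its center $\overline{\Bbbk} \otimes_{\Bbbk} K$ is reduced, which forces $K/\Bbbk$ to be separable. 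Standard central-simple-algebra theory then gives $\overline{\Bbbk} \otimes_{\Bbbk} D \cong M_n(\overline{\Bbbk})^{r}$ for $r = [K:\Bbbk]$ and $n^2 = \dim_K D$, so all $m_i$ equal the common value $n$.

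For the Frobenius--Perron dimension claim, I would construct an action of $G \coloneqq \mathrm{Aut}(\overline{\Bbbk}/\Bbbk)$ on $\overline{\mathcal{Z}(\mathcal{C})}$: each $\sigma \in G$ fixes objects of the form $\overline{\Bbbk} \otimes X$ coming from $\mathcal{Z}(\mathcal{C})$ and twists morphisms by $\sigma$ on the $\overline{\Bbbk}$-coefficients, extending to the Karoubian closure by the induced action on idempotents. Each $\sigma_*$ should be a $\Bbbk$-linear monoidal autoequivalence and hence induce a ring automorphism of $\mathrm{Gr}(\overline{\mathcal{Z}(\mathcal{C})})$ that permutes simple objects and preserves $\FPdim$, since $\FPdim$ of a simple is determined by the spectrum of its multiplication matrix, which is a permutation-conjugation invariant.

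To close, I would identify the simples $Z_i$ with the $r$ embeddings $K \hookrightarrow \overline{\Bbbk}$ via the primitive central idempotents of $\overline{\Bbbk} \otimes_{\Bbbk} D$, which cut out the isotypic summands of $\overline{\Bbbk} \otimes Z$. Since $K/\Bbbk$ is separable, $G$ acts transitively on these embeddings and hence on the $Z_i$, and the Galois-invariance of $\FPdim$ yields the equality of dimensions. The main technical hurdle I anticipate is setting up the monoidal Galois action on the Karoubian closure with enough care to justify that $\sigma_*$ genuinely respects the tensor product and sends primitive idempotents to primitive idempotents in the expected way; this is essentially formal given that scalar extension is compatible with the monoidal product of $\mathcal{Z}(\mathcal{C})$, but it needs to be spelled out cleanly.
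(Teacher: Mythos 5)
Your multiplicity argument is essentially the same as the paper's: both rely on the fact that $D = \End_{\mathcal{Z}(\mathcal{C})}(Z)$ is a finite-dimensional division algebra with center $K$, that $K/\kk$ is forced to be separable by semisimplicity after base change, and that $\overline\kk \otimes_\kk D$ then decomposes as $r = [K:\kk]$ copies of $M_n(\overline\kk)$ with $n^2 = \dim_K D$, giving the uniform multiplicity $n$. (Both proofs quietly lean on the assertion from the preceding subsection that $\mathrm{Kar}(\overline\kk\otimes\mathcal{Z}(\mathcal{C}))$ is again a weak fusion category; the paper does justify this via the identification $\mathcal{Z}(\overline\kk\otimes\mathcal{C}) = \overline{\mathcal{Z}(\mathcal{C})}$ and Bruguières--Virelizier's semisimplicity criterion, and you would need the same input.)

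Your argument for the Frobenius--Perron dimension claim, however, is a genuinely different route. The paper attacks it globally: using the formula $\FPdim(\mathcal{Z}(\mathcal{C})) = \FPdim(\mathcal{C})^2$ from \cite{non-split-fusion} over both $\kk$ and $\overline\kk$, it shows that a certain Cauchy--Schwarz inequality summed over all simples of $\mathcal{Z}(\mathcal{C})$ must be an equality, which forces $\FPdim(Z_{i,j}) = \lambda a_{i,j}$ for a single constant $\lambda$; combined with the multiplicity statement this gives the result. Your Galois-theoretic argument is more local and more elementary: it works one simple $Z$ at a time, does not invoke the nontrivial identity $\FPdim(\mathcal{Z}(\mathcal{C})) = \FPdim(\mathcal{C})^2$, and in fact does not use that $\mathcal{Z}(\mathcal{C})$ is a Drinfeld center at all --- it would apply verbatim to any weak fusion category whose scalar extension is again a weak fusion category. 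The price is that you need to set up the $\mathrm{Aut}(\overline\kk/\kk)$-action on the Karoubian closure as a monoidal autoequivalence and verify that it permutes the primitive central idempotents of $\overline\kk \otimes_\kk D$ transitively (equivalently, transitively on the embeddings $K \hookrightarrow \overline\kk$, which is exactly where separability enters); the invariance of $\FPdim$ under any ring automorphism of the Grothendieck ring permuting simples then follows from the characterization in \cite[Proposition 3.3.6]{EGNO}, which the paper also uses. Both routes are sound; the paper's gives as a by-product the uniformity of the constant $\lambda$ across all simples of $\mathcal{Z}(\mathcal{C})$, whereas yours isolates the general principle at work and avoids the heavier machinery.
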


\begin{proof}
    Let $Z_1,\ldots,Z_r$ be the non-isomorphic simple objects of $\mathcal{Z}(\mathcal{C})$. Let 
    \begin{equation}
        Z_i \otimes \overline \kk = a_{i,1}Z_{i,1} \oplus \cdots \oplus a_{i,n_i} Z_{i,n_i}
    \end{equation}
    be the decomposition of $Z_i \otimes \overline \kk$ in $\overline{\mathcal{Z}(\mathcal{C})}$ into pairwise non-isomorphic simple objects $Z_{i,j}$. By our remarks above, the $Z_{i,j}$ are pairwise non-isomorphic and yield all the simple objects of $\overline{\mathcal{Z}(\mathcal{C})}$. For any $i$ the Cauchy--Schwarz inequality yields
   \begin{equation} \label{csequal}
    \left(\sum\limits_{j=1}^{n_i} a_{i,j} \cdot \FPdim(Z_{i,j})\right)^2 \leq \left(\sum\limits_{j=1}^{n_i} \FPdim(Z_{i,j})^2\right)\left(\sum\limits_{j=1}^{n_i} a_{i,j}^2\right) \;.
   \end{equation}
   This is equivalent to
   \begin{equation}
     \frac{\left(\sum\limits_{j=1}^{n_i} a_{i,j} \cdot \FPdim(Z_{i,j})\right)^2}{\sum\limits_{j=1}^{n_i} a_{i,j}^2} \leq \sum\limits_{j=1}^{n_i} \FPdim(Z_{i,j})^2 \;. 
   \end{equation}
   Notice that
   \begin{equation}
    \sum_{j=1}^{n_i} a_{i,j}^2 = \dim_{\overline{\kk}} \mathrm{End}_{\overline{\mathcal{Z}(\mathcal{C})}}(Z_i \otimes \overline \kk) = \dim_{\kk} \End_{\mathcal{Z}(\mathcal{C})}( Z_i )\;.
   \end{equation}
   Moreover, since the Frobenius--Perron dimension is a ring morphism by \cite[Proposition 3.3.6]{EGNO}, we have
   \begin{equation}
    \FPdim( Z_i \otimes \overline \kk ) = \sum_{j=1}^{n_i} a_{i,j} \FPdim(Z_{i,j}) \;.
   \end{equation}
   Hence, $\FPdim(- \otimes \overline \kk)$ can be considered as a character of the Grothendieck ring of $\mathcal{Z}(\mathcal{C})$ taking positive values on the isomorphism classes of simple objects. Since this property uniquely characterizes the Frobenius--Perron dimension by \cite[Proposition 3.3.6]{EGNO}, it follows that
   \begin{equation}
     \FPdim(Z_i) = \FPdim( Z_i \otimes \overline \kk ) \;.
   \end{equation}
   Now, 
   \begin{align}
    \FPdim(\overline{\mathcal{Z}(\mathcal{C})}) & = \sum_{i=1}^r \sum_{j=1}^{n_i} \FPdim(Z_{i,j})^2 \geq \sum_{i=1}^r \frac{\left(\sum\limits_{j=1}^{n_i} a_{i,j} \cdot \FPdim(Z_{i,j})\right)^2}{\sum\limits_{j=1}^{n_i} a_{i,j}^2} \\
    & = \sum_{i=1}^r \frac{\FPdim(Z_i)^2}{\dim_{\kk} \End_{\mathcal{Z}(\mathcal{C})}( Z_i )} = \FPdim(\mathcal{Z}(\mathcal{C})) \;.
   \end{align}
   Since $\mathcal{C}$ is split, it follows that $\mathcal{C} \otimes \overline \kk$ is semisimple, hence equal to its Karoubian closure, so $\mathcal{C}$ is a ``rational form'' of $\mathcal{C} \otimes \overline \kk$ in the terminology of Theorem \cite[Section 2]{morrison2012non}. It thus follows from \cite[Lemma 5.1]{morrison2012non} that $\mathcal{Z}(\mathcal{C})$ is a rational form of $\mathcal{Z}(\mathcal{C} \otimes \overline \kk)$, i.e.,
   \begin{equation}
    \mathcal{Z}(\mathcal C \otimes \overline \kk) = \overline{\mathcal{Z}(\mathcal{C})} \;.
   \end{equation}
   Using Equation \eqref{eq:fpdim_center}, we thus get
   \begin{equation}
    \FPdim( \mathcal{Z}(\mathcal{C}) ) = \FPdim(\mathcal{C})^2 = \FPdim(\mathcal C \otimes \overline \kk)^2 = \FPdim( \mathcal{Z}(\mathcal C \otimes \overline \kk)) = \FPdim(\overline{\mathcal{Z}(\mathcal{C})} ) \;.
   \end{equation}
   We thus have equality in the Cauchy--Schwarz inequality \eqref{csequal}. This implies that there is some $\lambda \in \mathbb{R}$ such that
   \begin{equation}
     \FPdim(Z_{i,j}) = \lambda a_{i,j} 
   \end{equation}
   for all $i,j$. This holds if and only if 
   \begin{equation} \label{eq:fpdim_ai}
    \frac{a_{i,j}}{\FPdim(Z_{i,j})} = \lambda = \frac{a_{i',j'}}{\FPdim(Z_{i',j'})}
   \end{equation}
    for all $i,j,i',j'$. 
   
   Now, fix some $i$. Consider $A = \End_{\mathcal{Z}(\mathcal{C})}{Z}_i$. By assumption, $A$ is a simple division algebra. Consider the extension $Z(A)\otimes_{\kk} A$ which is as $Z(A)$-algebra isomorphic to $\bigoplus_{j=1}^{[Z(A):\kk]} A$. Finally, $A$ is central simple as $Z(A)$-algebra and hence has a splitting field $\mathbb K \supseteq \mathbb k$ such that $A \cong \mathrm{Mat}_{l \times l}(\mathbb K)$. We conclude that $\mathbb K \otimes A \cong \bigoplus_{j = 1}^{[Z(A):\kk]} \mathrm{Mat}_{l\times l}(\mathbb K)$ as $\mathbb K$-algebras. This implies that $Z \otimes \mathbb K$ decomposes in $\mathcal{Z}(\mathcal{C}) \boxtimes \mathbb K$ into $[\mathbb Z(A) : \kk]$ non-isomorphic simple objects, each with multiplicity $l$.  This forces $a_{i,j} = l$ for all $j$, and thus, by Equation \eqref{eq:fpdim_ai}, all $Z_{i,j}$ have the same Frobenius--Perron dimension.
\end{proof}

\begin{remark}
    If $\kk$ is a finite field the statement above is even stronger since all finite dimensional division algebras over a finite field are fields. Thus, a simple object $Z$ decomposes into non-isomorphic simple objects with multiplicity one.
\end{remark}

\begin{example}
    The case that a simple object occurs with multiplicity greater than one from Lemma \ref{simplesdecomposition} can occur. Let $\mathbf Q$ be the quaternion group and consider the category $\mathrm{Vec}_{\QQ}(\mathbf Q)$ of finite-dimensional $\mathbf Q$-graded vector spaces over $\QQ$. Then there is a central object lying over $4 \cdot \mathbf 1_{\mathbf Q}$ corresponding to the four-dimensional irreducible $\QQ$-representation of $\mathbf Q$. This object has (similar to the representation) an endomorphism ring isomorphic to the rational quaternions $\mathbb H_{\QQ}$. Analogously to the representation it decomposes into two copies of the same simple object over $\QQ(\sqrt{-1})$.
\end{example}

\section{Software framework and examples} \label{sec:software}

Our software \cite{Maeurer_TensorCategories_jl} is completely open source and comes with a documentation. Its general purpose is to serve as a  framework for experiments with tensor categories and categorical representation theory. Instead of going into long discussions about its design and capabilities, we illustrate it by working through the example from the introduction: the Ising fusion category over $\mathbb Q(\sqrt 2)$. For users new to Julia we note Julia uses just-in-time compilation (JIT). This is, in parts, what makes Julia so fast but it means that the \emph{first} execution of a function always takes a bit of time (since its code will be compiled)---afterward it is instantaneous.

\subsection{The Ising Category over $\mathbb{Q}(\sqrt{2})$}

Users are completely free in how exactly they want to model a category. For encoding a fusion category via $F$-symbols we provide an own convenient structure called \texttt{SixJCategory}\footnote{The name \texttt{SixJCategory} origins in the classical name for the $F$-symbols used in for example \cite{EGNO}. In general $6j$-symbol can be used synonymously to $F$-symbol.}. The following code shows how to define the Ising fusion category over a general field $K$. An element $a \in K$ with $a^2=2$ needs to be provided to define the associators (see the introduction).

\begin{tcolorbox}[title = \juliasymbol, breakable]
    \begin{juliaprompt}
function ising_category(F::Ring, a::RingElem)
    C = six_j_category(F,["
    
    # Multiplication table of the Grothendieck ring
    M = zeros(Int,3,3,3)
    M[1,1,:] = [1,0,0]
    M[1,2,:] = [0,1,0]
    M[1,3,:] = [0,0,1]
    M[2,1,:] = [0,1,0]
    M[2,2,:] = [1,0,0]
    M[2,3,:] = [0,0,1]
    M[3,1,:] = [0,0,1]
    M[3,2,:] = [0,0,1]
    M[3,3,:] = [1,1,0]

    set_tensor_product!(C,M)

    # The associators
    set_associator!(C,2,3,2, matrices(-id(C[3])))
    set_associator!(C,3,2,3, matrices((id(C[1]))
    z = zero(matrix_space(F,0,0))
    set_associator!(C,3,3,3, [z, z, inv(a)*matrix(F,[1 1; 1 -1])])

    set_one!(C,[1,0,0])

    set_spherical!(C, [F(1) for s in simples(C)])

    set_name!(C, "Ising fusion category")
    return C
end
    \end{juliaprompt}\vspace{-\baselineskip}
\end{tcolorbox}

We can choose as $K$ the algebraic closure of $\mathbb{Q}$, which is also available in OSCAR via \texttt{algebraic\_closure(QQ)}. But we can also work over $K = \QQ(\sqrt{2})$, which is what we will do.

\begin{tcolorbox}[title = \juliasymbol, breakable]
    \begin{juliaprompt}
julia> K,r2 = quadratic_field(2)
(Real quadratic field defined by x^2 - 2, sqrt(2))

julia> I = ising_category(K,r2)
Ising fusion category

julia> a,b,c = simples(I)
3-element Vector{SixJObject}:
    \end{juliaprompt} \vspace{-\baselineskip}
\end{tcolorbox}

Let us now come to the proof Theorem \ref{thm:ising}. We compute the simple objects of the center by invoking the algorithm in Section \ref{sec:algorithm}.

\begin{tcolorbox}[title = \juliasymbol]
    \begin{juliaprompt}
julia> C = center(I)
Drinfeld center of Fusion Category with 3 simple objects

julia> simples(C)
5-element Vector{CenterObject}:
Central object: 
Central object: 
Central object: 
Central object: 2
Central object: 4
    \end{juliaprompt}\vspace{-\baselineskip}
\end{tcolorbox}     

We can see that there are five non-isomorphic simple objects. We show that two of them are not split over $K$ and examine over which fields they will split. To do so we examine the endomorphism spaces. The object over $2\cdot \chi$ will split if there is an endomorphism that is a zero-divisor, i.e. if there is a morphism with a non-trivial eigenvalue. Thus, we take a non-trivial endomorphism and consider the splitting field for its minimal polynomial.

\begin{tcolorbox}
    [title = \juliasymbol, breakable]
    \begin{juliaprompt}
julia> H = End(C[4])
Vector space of dimension 2 over Real quadratic field defined by x^2 - 2.

julia> minpoly.(basis(H)) # minimal polynomials of basis morphisms
2-element Vector{AbstractAlgebra.Generic.Poly{nf_elem}}:
x^2 + 1
x - 1
    \end{juliaprompt}\vspace{-\baselineskip}
\end{tcolorbox}

So if we extend the field of definition to the splitting field of $x^2+1$, i.e. $\mathbb{Q}(\sqrt 2, i)$, it will split. Similarly, one can show that the fifth simple object decomposes under this extension.

\begin{tcolorbox}
    [title = \juliasymbol, breakable]
    \begin{juliaprompt}
julia> Kx,x = base_ring(I)[:x]
(Univariate polynomial ring in x over real quadratic field defined by x^2 - 2, x)

julia> L,i = number_field(x^2+1, "i")
(Relative number field of degree 2 over real quadratic field defined by x^2 - 2, i)

julia> C2 = C 
Drinfeld center of Fusion Category with 3 simple objects

julia> simples(C2)
7-element Vector{CenterObject}:
 Central object: 
 Central object: 
 Central object: 
 Central object: 
 Central object: 
 Central object: 2
 Central object: 2
\end{tcolorbox}

Repeat the splitting process one more time for the 6-th or 7-th simple.

\begin{tcolorbox}[title = \juliasymbol, breakable]
    \begin{juliaprompt}
julia> _,f = minpoly.(basis(End(C2[6])))
2-element Vector{AbstractAlgebra.Generic.Poly{Hecke.NfRelElem{nf_elem}}}:
x - 1
x^2 + 1//4*sqrt(2)*i - 1//4*sqrt(2)

julia> M,a = number_field(f,"a")
(Relative number field of degree 2 over relative number field, a)

julia> simples(C2 
9-element Vector{CenterObject}:
 Central object: 
 Central object: 
 Central object: 
 Central object: 
 Central object: 
 Central object: X
 Central object: X
 Central object: X
 Central object: X

julia> simplify(absolute_simple_field(M)[1])[1]
Number field with defining polynomial x^8 + 1
over rational field
    \end{juliaprompt}\vspace{-\baselineskip}
\end{tcolorbox}

Thus, we see that the center of the Ising category splits over the cyclotomic field $\mathbb Q(\xi_{16})$ with $\xi_{16}$ a primitive 16-th root of unity.
Next, we compute the multiplication table and $S$-matrix of the center over $K$.

\begin{tcolorbox}[title = \juliasymbol, breakable]
    \begin{juliaprompt}
julia> print_multiplication_table(C)
5
"X2"  "X1"  "X3"             "X4"           "X5"
"X1"  "X2"  "X3"             "X4"           "X5"
"X3"  "X3"  "X1 
"X4"  "X4"  "2
"X5"  "X5"  "2

julia> smatrix(C)
[    1      1    2    2   -4*
[    1      1    2    2    4*
[    2      2    0   -4       0]
[    2      2   -4    4       0]
[-4*
    \end{juliaprompt}\vspace{-\baselineskip}
\end{tcolorbox}

We can also investigate the actual half-braiding morphisms and endomorphism rings.

\begin{tcolorbox}[title = \juliasymbol, breakable]
    \begin{juliaprompt}
julia> half_braiding(C[4])
3-element Vector{TensorCategories.SixJMorphism}:
    Morphism with
Domain: 2
Codomain: 2
Matrices: 0 by 0 empty matrix, [1 0; 0 1], 0 by 0 empty matrix
    Morphism with
Domain: 2
Codomain: 2
Matrices: [-1 0; 0 -1], 0 by 0 empty matrix, 0 by 0 empty matrix
    Morphism with
Domain: 2
Codomain: 2
Matrices: 0 by 0 empty matrix, 0 by 0 empty matrix, [0 1//2; -2 0]

julia> R = endomorphism_ring(C[4])
Matrix algebra of dimension 2 over Number field of degree 2 over QQ

julia> basis(R)
2-element Vector{MatAlgebraElem{AbsSimpleNumFieldElem, AbstractAlgebra.Generic.MatSpaceElem{AbsSimpleNumFieldElem}}}:
 [1 0; 0 1]
 [0 -1//4; 1 0]
    \end{juliaprompt}\vspace{-\baselineskip}
\end{tcolorbox}

The full data for the endomorphism rings and half-braidings is given by 

\begin{align*}
    \End(2\chi) &= \left\langle \begin{pmatrix}1 & 0 \\ 0 & 1\end{pmatrix}, \begin{pmatrix} 0 & -1 \\ 4 & 0\end{pmatrix}\right\rangle_K \cong K[i] \\
    \End(4X) &= \left\langle{\tiny \begin{pmatrix} 1 & 0 & 0 & 0 \\ 0 & 1 & 0 & 0 \\ 0 & 0 & 1 & 0 \\ 0 & 0 & 0 & 1 \end{pmatrix}, \begin{pmatrix} 0 & -1 & 0 & 0 \\ 1 & 0 & 0 & 0 \\ 0 & 0 & 0 & 1 \\ 0 & 0 & -1 & 0 \end{pmatrix}, \begin{pmatrix} 0 & 0 & 0 & \frac{\sqrt2}{2} \\ 0 & 0 & \frac{\sqrt2}{2} & 0 \\ 1 & -1 & 0 & 0 \\ -1 & 1 & 0 & 0 \end{pmatrix}, \begin{pmatrix} 0 & 0 & \frac{\sqrt2}{2} & 0 \\ 0 & 0 & & 0 \frac{\sqrt2}{2} \\ 1 & 1 & 0 & 0 \\ 1 & -1 & 0 & 0 \end{pmatrix}  }\right\rangle_K \\ 
    &\cong K[\beta], \text{where}~\beta^4-\sqrt2\beta^2 + 1 = 0
\end{align*}

and 

\begin{align*}
    \gamma_{\overline{\mathbb 1}}(\chi) &= \id_{\chi}  & \gamma_{\overline{\mathbb 1}} &= -\id_{X} \\
    \gamma_{\mathbb 1 + \chi}(\chi) &= \id_{\mathbb 1} - \id_{\chi} & \gamma_{\mathbb 1 + \chi}(X) &= -\id_{2X} \\
    \gamma_{2\chi}(\chi) &= -\id{2\mathbb 1} & \gamma_{2\mathbb 1}(X) &= \begin{pmatrix}
        \frac{1}{2} & 0 \\ 0 & 2
    \end{pmatrix} \id_{2X} \\
    \gamma_{4X}(\chi) &= \begin{pmatrix}
        0 & -1 & 0 & 0 \\ 1 & 0 & 0 & 0 \\ 0 & 0 & 0 & 1 \\ 0 & 0 & -1 & 0
    \end{pmatrix}\id_{4X} & & \\
    \gamma_{4X}(X) &= \begin{pmatrix}
        0 & 0 & \frac{1}{2} & \frac{1}{2} \\ 0 & 0 & \frac{1}{2} & -\frac{1}{2} \\ \sqrt2 & 0 & 0 & 0 \\ 0 & -\sqrt2 & 0 & 0
    \end{pmatrix}\id_{4\mathbb 1} + \begin{pmatrix}
        0 & 0 & \frac{1}{2} & -\frac{1}{2} \\ 0 & 0 & -\frac{1}{2} & -\frac{1}{2} \\  0 & \sqrt2 & 0 & 0 \\ \sqrt2 & 0 & 0 & 0 
    \end{pmatrix}\id_{4\chi} \span \span.
\end{align*}

\section{Centers of multiplicity-free fusion categories up to rank 5} \label{sec:AnyonCenters}

Thanks to the work done by Gert Vercleyen \cite{vercleyen2024lowrankmultiplicityfreefusioncategories} we have a complete list of all multiplicity-free fusion categories up to rank seven. These categories are originally available in the Mathematica package Anyonica \cite{Anyonica} and the AnyonWiki \cite{AnyonWiki}. Together with the first author we also made these categories available in \textsc{TensorCategories.jl}. From there we were able to compute the centers of all multiplicity-free fusion categories up to rank five. The results are available as part of the package. 
The fusion categories from the AnyonWiki are encoded by a sequence of integers $(r,m,n,i,j,k,l)$, where 
\begin{itemize}
    \item $r$ is the rank of the category,
    \item $m$ is the multiplicity,
    \item $n$ is the number of not self-dual simple objects,
    \item $i$ is the index of the fusion ring,
    \item $j$ is the index of the associator,
    \item $k$ is the index of the braiding and
    \item $l$ is the index of the pivotal structure.
\end{itemize}

The details on how the indices are determined are explained in \cite{vercleyen2024lowrankmultiplicityfreefusioncategories} or on the webpage of the AnyonWiki \cite{AnyonWiki}. The following code shows how to access the fusion category with parameters $(4,1,2,4,1,0,1)$ and its center. This category has four simple objects of which two are self-dual and two are not. The Grothendieck ring has Frobenius--Perron dimension 13.6569, which is the largest possible for multiplicity one and rank four. The center then is a modular fusion category of dimension 186.510, has 14 simple objects and multiplicity two.

\begin{tcolorbox}[title = \juliasymbol, breakable]
    \begin{juliaprompt}
julia> using TensorCategories, Oscar

julia> C = anyonwiki(4,1,2,4,1,0,1)
Fusion Category 4_1_2_4_1_0_1

julia> Z = anyonwiki_center(4,1,2,4,1,0,1)
Skeletonization of Drinfeld center of Fusion Category 4_1_2_4_1_0_1

julia> simples(C)
4-element Vector{SixJObject}:
 X2
 X3
 X4

julia> print_multiplication_table(C)
4
 "
 "X2"  "
 "X3"  "X4"  "X2 
 "X4"  "X3"  "

 julia> simples(Z)
14-element Vector{SixJObject}:

# Print the multiplication table of the center with the simple objects named X1,...,X14
julia> print_multiplication_table(multiplication_table(Z))
14
 "X1"   "X2"   "X3"               
 "X2"   "X1"   "X5"                    "X13"
 "X3"   "X5"   "X4 
 "X4"   "X6"   "X2 
 "X5"   "X3"   "X6 
 "X6"   "X4"   "X1 
 "X7"   "X9"   "X4 
 "X8"   "X10"  "X9 
 "X9"   "X7"   "X6 
 "X10"  "X8"   "X7 
 "X11"  "X12"  "X5 
 "X12"  "X11"  "X3 
 "X13"  "X14"  "X8 
 "X14"  "X13"  "X10 

# Partial view of the S-matrix
julia> view(smatrix(Z),1:4,1:4)
[                1                  -1      -_a^3 - _a^2 - _a      -_a^3 - _a^2 - _a]
[               -1                  -1      -_a^3 - _a^2 - _a       _a^3 + _a^2 + _a]
[-_a^3 - _a^2 - _a   -_a^3 - _a^2 - _a       -_a^2 - 2*_a - 2   -2*_a^3 - 2*_a^2 + 1]
[-_a^3 - _a^2 - _a    _a^3 + _a^2 + _a   -2*_a^3 - 2*_a^2 + 1       -_a^2 - 2*_a - 2]

# The T-matrix
julia> diagonal(tmatrix(Z))
14-element Vector{AbsSimpleNumFieldElem}:
 1
 -_a^2
 _a^3 - _a^2 - _a
 _a^3 - _a - 1
 -_a^3 - _a + 1
 _a^3 + _a^2 + _a
 1//2*_a^3 - 3//2*_a^2 - 3//2*_a - 1//2
 1//2*_a^3 + 1//2*_a^2 - 3//2*_a + 1//2
 -3//2*_a^3 - 1//2*_a^2 - 1//2*_a + 3//2
 -3//2*_a^3 + 1//2*_a^2 - 1//2*_a - 1//2
 -_a^3
 _a
 2*_a^3 - 2*_a + 3
 2*_a^3 - 3*_a^2 + 2*_a

    \end{juliaprompt}\vspace{-\baselineskip}
\end{tcolorbox}

Last, we want to investigate the $F$-symbols and $R$-symbols. This can be done either symbolically or numerically. To view numerical symbols we need to specify an embedding into the complex numbers. For the fusion categories from the AnyonWiki and their centers this was already done and is stored in our database. 

\begin{tcolorbox}[title = \juliasymbol, breakable]
    \begin{juliaprompt}
julia> F_symbols(Z)
Dict{Vector{Int64}, AbsSimpleNumFieldElem} with 74880 entries:
  [10, 14, 4, 2, 7, 1, 1, 3, 1, 1]     => -_a^3
  [11, 10, 8, 5, 7, 1, 1, 6, 1, 1]     => _a^3 + 1//2*_a^2 - 1//2*_a - 1
  [10, 6, 8, 9, 13, 1, 1, 14, 1, 1]    => _a^3 + 1//2*_a^2 - 1//2
  [14, 13, 13, 14, 10, 1, 1, 11, 1, 1] => -35//17*_a^3 + 70//17*_a^2 - 72//17*_a + 25//17
  [8, 6, 11, 14, 13, 1, 1, 13, 1, 1]   => -1//12*_a^3 - 7//12*_a^2 + 3//4*_a - 7//12

julia> R_symbols(Z)
Dict{Vector{Int64}, AbsSimpleNumFieldElem} with 592 entries:
  [7, 3, 4, 1, 1]    => -_a^2
  [4, 8, 14, 1, 1]   => -_a^2
  [11, 1, 11, 1, 1]  => 1
  [11, 13, 14, 2, 2] => _a^3 - _a^2
  [11, 11, 10, 1, 1] => -_a^2
  [13, 13, 12, 2, 1] => 3//34*_a^3 + 11//34*_a^2 - 5//34*_a + 27//34

julia> numeric_F_symbols(Z, precision = 8) # precision in bits
Dict{Vector{Int64}, AcbFieldElem} with 74880 entries:
  [7, 12, 10, 14, 10, 1, 1, 11, 1, 1]  => [-0.414 +/- 3.41e-4] + [0.414 +/- 4.02e-4]*im
  [10, 14, 4, 2, 7, 1, 1, 3, 1, 1]     => [0.707 +/- 1.49e-4] + [-0.707 +/- 1.49e-4]*im
  [10, 13, 7, 11, 11, 1, 1, 13, 1, 1]  => [-0.172 +/- 5.13e-4] + [+/- 2.66e-4]*im
  [11, 14, 13, 13, 3, 1, 1, 10, 1, 1]  => [-0.293 +/- 1.49e-4] + [-0.707 +/- 1.49e-4]*im
  [13, 8, 13, 14, 8, 1, 1, 11, 1, 1]   => [-0.14 +/- 4.84e-3] + [-1.16 +/- 4.52e-3]*im
  [11, 10, 8, 5, 7, 1, 1, 6, 1, 1]     => [-2.061 +/- 5.09e-4] + [0.853 +/- 6.25e-4]*im

julia> numeric_R_symbols(Z, precision = 8)
Dict{Vector{Int64}, AcbFieldElem} with 592 entries:
  [7, 3, 4, 1, 1]    => [+/- 4.32e-5] + [-1.00 +/- 1.05e-4]*im
  [4, 8, 14, 1, 1]   => [+/- 4.32e-5] + [-1.00 +/- 1.05e-4]*im
  [11, 1, 11, 1, 1]  => 1.00
  [11, 13, 14, 2, 2] => [-0.707 +/- 1.93e-4] + [-0.293 +/- 2.54e-4]*im
  [11, 11, 10, 1, 1] => [+/- 4.32e-5] + [-1.00 +/- 1.05e-4]*im
  [13, 13, 12, 2, 1] => [0.628 +/- 3.44e-4] + [0.2819 +/- 6.86e-5]*im
    \end{juliaprompt}\vspace{-\baselineskip}
\end{tcolorbox} 
\subsection{Some statistics}

In the following table we give some numeric results for the centers of all multiplicity-free fusion categories up to rank five. We list the rank, the multiplicity, the Frobenius--Perron dimension as well as the largest simple subcategory of the centers. Note that the indices for the braiding and pivotal structures are not relevant for the structure of the centers as a braided fusion category. Therefore, for readability, we only list a representative for the first five indices.

There are some interesting observations. For example, there are only twelve categories whose center has multiplicity one and no center is simple. 

\newpage
\def\arraystretch{1.5}

\begin{longtable}{c||c|c|c|c|c}
\textbf{Label} & \textbf{Rank} & \textbf{Multiplicity} & \textbf{FPdim} & \multicolumn{2}{c}{\textbf{Max. simple subcategory}} \\ 
    &   &   &   &  \textbf{Rank} & \textbf{FPdim} \\ \hline
        \AnyonCode{1,1,0,1,1,a,b} & 1 & 1 & 1.00 & 1 & 1.00 \\ 
\AnyonCode{2,1,0,1,1,a,b} & 4 & 1 & 4.00 & 2 & 2.00 \\ 
\AnyonCode{2,1,0,1,2,a,b} & 4 & 1 & 4.00 & 2 & 2.00 \\ 
\AnyonCode{2,1,0,2,1,a,b} & 4 & 1 & 13.09 & 2 & 3.62 \\ 
\AnyonCode{2,1,0,2,2,a,b} & 4 & 1 & 13.09 & 2 & 3.62 \\ 
\AnyonCode{3,1,0,1,1,a,b} & 9 & 1 & 16.00 & 2 & 2.00 \\ 
\AnyonCode{3,1,0,1,2,a,b} & 9 & 1 & 16.00 & 2 & 2.00 \\ 
\AnyonCode{3,1,0,2,1,a,b} & 8 & 1 & 36.00 & 2 & 2.00 \\ 
\AnyonCode{3,1,0,2,2,a,b} & 8 & 1 & 36.00 & 2 & 2.00 \\ 
\AnyonCode{3,1,0,2,3,a,b} & 8 & 1 & 36.00 & 2 & 2.00 \\ 
\AnyonCode{3,1,0,3,1,a,b} & 9 & 1 & 86.41 & 3 & 9.30 \\ 
\AnyonCode{3,1,0,3,2,a,b} & 9 & 1 & 86.41 & 3 & 9.30 \\ 
\AnyonCode{3,1,0,3,3,a,b} & 9 & 1 & 86.41 & 3 & 9.30 \\ 
\AnyonCode{3,1,2,1,1,a,b} & 9 & 1 & 9.00 & 3 & 3.00 \\ 
\AnyonCode{3,1,2,1,2,a,b} & 9 & 1 & 9.00 & 3 & 3.00 \\ 
\AnyonCode{3,1,2,1,3,a,b} & 9 & 1 & 9.00 & 3 & 3.00 \\ 
\AnyonCode{4,1,0,1,1,a,b} & 16 & 1 & 16.00 & 2 & 2.00 \\ 
\AnyonCode{4,1,0,1,2,a,b} & 16 & 1 & 16.00 & 2 & 2.00 \\ 
\AnyonCode{4,1,0,1,3,a,b} & 16 & 1 & 16.00 & 2 & 2.00 \\ 
\AnyonCode{4,1,0,1,4,a,b} & 16 & 1 & 16.00 & 2 & 2.00 \\ 
\AnyonCode{4,1,0,2,1,a,b} & 16 & 1 & 52.36 & 2 & 3.62 \\ 
\AnyonCode{4,1,0,2,2,a,b} & 16 & 1 & 52.36 & 2 & 3.62 \\ 
\AnyonCode{4,1,0,2,3,a,b} & 16 & 1 & 52.36 & 2 & 3.62 \\ 
\AnyonCode{4,1,0,2,4,a,b} & 16 & 1 & 52.36 & 2 & 3.62 \\ 
\AnyonCode{4,1,0,3,1,a,b} & 16 & 1 & 100.00 & 2 & 2.00 \\ 
\AnyonCode{4,1,0,3,2,a,b} & 16 & 1 & 100.00 & 2 & 2.00 \\ 
\AnyonCode{4,1,0,3,3,a,b} & 16 & 1 & 100.00 & 2 & 2.00 \\ 
\AnyonCode{4,1,0,4,1,a,b} & 14 & 2 & 186.51 & 2 & 2.00 \\ 
\AnyonCode{4,1,0,4,2,a,b} & 14 & 2 & 186.51 & 2 & 2.00 \\ 
\AnyonCode{4,1,0,5,1,a,b} & 16 & 1 & 171.35 & 2 & 3.62 \\ 
\AnyonCode{4,1,0,5,2,a,b} & 16 & 1 & 171.35 & 2 & 3.62 \\ 
\AnyonCode{4,1,0,5,3,a,b} & 16 & 1 & 171.35 & 2 & 3.62 \\ 
\AnyonCode{4,1,0,6,1,a,b} & 16 & 1 & 369.96 & 4 & 19.23 \\ 
\AnyonCode{4,1,0,6,2,a,b} & 16 & 1 & 369.96 & 4 & 19.23 \\ 
\AnyonCode{4,1,0,6,3,a,b} & 16 & 1 & 369.96 & 4 & 19.23 \\ 
\AnyonCode{4,1,2,1,1,a,b} & 16 & 1 & 16.00 & 2 & 2.00 \\ 
\AnyonCode{4,1,2,1,2,a,b} & 16 & 1 & 16.00 & 2 & 2.00 \\ 
\AnyonCode{4,1,2,1,3,a,b} & 16 & 1 & 16.00 & 2 & 2.00 \\ 
\AnyonCode{4,1,2,1,4,a,b} & 16 & 1 & 16.00 & 2 & 2.00 \\ 
\AnyonCode{4,1,2,2,1,a,b} & 15 & 1 & 36.00 & 3 & 3.00 \\ 
\AnyonCode{4,1,2,2,2,a,b} & 15 & 1 & 36.00 & 3 & 3.00 \\ 
\AnyonCode{4,1,2,2,3,a,b} & 15 & 1 & 36.00 & 3 & 3.00 \\ 
\AnyonCode{4,1,2,2,4,a,b} & 15 & 1 & 36.00 & 3 & 3.00 \\ 
\AnyonCode{4,1,2,4,1,a,b} & 14 & 2 & 186.51 & 7 & 93.25 \\ 
\AnyonCode{4,1,2,4,2,a,b} & 14 & 2 & 186.51 & 7 & 93.25 \\ 
\AnyonCode{4,1,2,4,3,a,b} & 14 & 2 & 186.51 & 7 & 93.25 \\ 
\AnyonCode{4,1,2,4,4,a,b} & 14 & 2 & 186.51 & 7 & 93.25 \\ 
\AnyonCode{5,1,0,1,1,a,b} & 22 & 1 & 64.00 & 2 & 2.00 \\ 
\AnyonCode{5,1,0,1,2,a,b} & 22 & 1 & 64.00 & 2 & 2.00 \\ 
\AnyonCode{5,1,0,1,3,a,b} & 22 & 1 & 64.00 & 2 & 2.00 \\ 
\AnyonCode{5,1,0,1,4,a,b} & 22 & 1 & 64.00 & 2 & 2.00 \\ 
\AnyonCode{5,1,0,3,1,a,b} & 25 & 1 & 144.00 & 2 & 2.00 \\ 
\AnyonCode{5,1,0,3,2,a,b} & 25 & 1 & 144.00 & 2 & 2.00 \\ 
\AnyonCode{5,1,0,4,1,a,b} & 28 & 1 & 196.00 & 2 & 2.00 \\ 
\AnyonCode{5,1,0,4,2,a,b} & 28 & 1 & 196.00 & 2 & 2.00 \\ 
\AnyonCode{5,1,0,4,3,a,b} & 28 & 1 & 196.00 & 2 & 2.00 \\ 
\AnyonCode{5,1,0,6,1,a,b} & 21 & 2 & 576.00 & 2 & 2.00 \\ 
\AnyonCode{5,1,0,6,2,a,b} & 21 & 2 & 576.00 & 2 & 2.00 \\ 
\AnyonCode{5,1,0,7,1,a,b} & 22 & 2 & 685.41 & 2 & 2.00 \\ 
\AnyonCode{5,1,0,7,2,a,b} & 22 & 2 & 685.41 & 2 & 2.00 \\ 
\AnyonCode{5,1,0,10,1,a,b} & 25 & 1 & 1200.38 & 5 & 34.65 \\ 
\AnyonCode{5,1,0,10,2,a,b} & 25 & 1 & 1200.38 & 5 & 34.65 \\ 
\AnyonCode{5,1,0,10,3,a,b} & 25 & 1 & 1200.38 & 5 & 34.65 \\ 
\AnyonCode{5,1,0,10,4,a,b} & 25 & 1 & 1200.38 & 5 & 34.65 \\ 
\AnyonCode{5,1,0,10,5,a,b} & 25 & 1 & 1200.38 & 5 & 34.65 \\ 
\AnyonCode{5,1,2,1,1,a,b} & 22 & 1 & 64.00 & 2 & 2.00 \\ 
\AnyonCode{5,1,2,1,2,a,b} & 22 & 1 & 64.00 & 2 & 2.00 \\ 
\AnyonCode{5,1,2,1,3,a,b} & 22 & 1 & 64.00 & 2 & 2.00 \\ 
\AnyonCode{5,1,2,1,4,a,b} & 22 & 1 & 64.00 & 2 & 2.00 \\ 
\AnyonCode{5,1,2,3,1,a,b} & 25 & 1 & 144.00 & 2 & 2.00 \\ 
\AnyonCode{5,1,2,3,2,a,b} & 25 & 1 & 144.00 & 2 & 2.00 \\ 
\AnyonCode{5,1,2,4,1,a,b} & 21 & 2 & 576.00 & 2 & 2.00 \\ 
\AnyonCode{5,1,2,4,2,a,b} & 21 & 2 & 576.00 & 2 & 2.00 \\ 
\AnyonCode{5,1,4,1,1,a,b} & 25 & 1 & 25.00 & 5 & 5.00 \\ 
\AnyonCode{5,1,4,1,2,a,b} & 25 & 1 & 25.00 & 5 & 5.00 \\ 
\AnyonCode{5,1,4,1,3,a,b} & 25 & 1 & 25.00 & 5 & 5.00 
\end{longtable}

\end{document}